\definecolor{darkgreen}{RGB}{0,80,0}
\setlist[enumerate]{itemsep=5pt, labelindent=0pt, leftmargin=*, topsep=5pt, itemindent=\parindent}
\newtheorem{numerable}{Numerable}
\newtheorem{lemma}[numerable]{Lemma}
\newtheorem{proposition}[numerable]{Proposition}
\newtheorem{theorem}[numerable]{Theorem}
\newtheorem{corollary}[numerable]{Corollary}
\newtheorem{conjecture}{Conjecture}
\numberwithin{numerable}{section}
\theoremstyle{definition}
\newtheorem{definition}[numerable]{Definition}
\newtheorem{convention}[numerable]{Convention}
\newtheorem{remark}[numerable]{Remark}
\numberwithin{equation}{section}
\definecolor{details}{RGB}{0,0,255}
\definecolor{task}{RGB}{0,191,0}
\definecolor{sketch}{RGB}{255,0,0}
\newcommand{\CB}{\ensuremath{\mathcal{B}}}
\newcommand{\CC}{\ensuremath{\mathcal{C}}}
\newcommand{\D}{\ensuremath{\Delta}}
\newcommand{\Der}{\operatorname{Der}}
\newcommand{\Hom}{\operatorname{Hom}}
\newcommand{\K}{\ensuremath{\mathbb{K}}}
\newcommand{\Kosz}{\operatorname{Kosz}}
\newcommand{\N}{\ensuremath{\mathbb{N}}}
\newcommand{\M}{\ensuremath{\mathcal{M}}}
\newcommand{\R}{\mathbb{R}}
\newcommand{\Syz}{\operatorname{Syz}}
\newcommand{\CU}{\ensuremath{\mathcal{U}}}
\newcommand{\Z}{\mathbb{Z}}
\newcommand{\bfa}{\ensuremath{\mathbf{a}}}
\newcommand{\bfb}{\ensuremath{\mathbf{b}}}
\newcommand{\bfc}{\ensuremath{\mathbf{c}}}
\newcommand{\card}[1]{\#{#1}}
\newcommand{\cone}{\ensuremath{\operatorname{cone}}}
\newcommand{\deff}[1]{{\textbf{#1}}}
\newcommand{\defi}[1]{\textbf{#1}}
\newcommand{\link}{\ensuremath{\operatorname{link}}}
\newcommand{\lk}{\ensuremath{\operatorname{link}}}
\newcommand{\opensimplex}[1]{\ensuremath{\braket{#1}}}
\newcommand{\rank}{\ensuremath{\operatorname{rank}}}
\newcommand{\sdif}{\ensuremath{\,\scalebox{.8}{$\fgebackslash$}\,}}
\newcommand{\sodass}{~\,:\,~}
\newcommand{\str}{\ensuremath{\operatorname{star}}}
\newcommand{\supp}{\operatorname{supp}}
\newcommand{\then}{\ensuremath{\Rightarrow}}
\newcommand{\back}{\ensuremath{\Leftarrow}}
\newcommand{\too}{\ensuremath{\longrightarrow}}
\newcommand{\wt}[1]{\widetilde{#1}}
\title{Simplicial complexes and matroids with vanishing $T^2$}
\author[A. Constantinescu]{Alexandru Constantinescu}
\address[AC]{Mathematics Institute, Freie Universit\"at Berlin, Berlin, Germany}
\email{alexandru.constantinescu@fu-berlin.de }
\author[P. Klein]{Patricia Klein}
\address[PK]{Department of Mathematics, Texas A\&M University, College Station, TX, USA}
\email{pjklein@tamu.edu}
\author[T.T.Nguy$\tilde{\text{\^E}}$n]{Th\'ai Th\`anh Nguy$\tilde{\text{\^E}}$n}
\address[TTN]{Department of Mathematics, University of Dayton, Dayton, OH, USA and \\
	University of Education, Hue University, 34 Le Loi St., Hue, Viet Nam}
\email{tnguyen5@udayton.edu}
\author[A.Singh]{Anurag Singh}
\address[AS]{Department of Mathematics, Indian Institute of Technology Bhilai, Durg, Chhattisgarh, India}
\email{anurags@iitbhilai.ac.in}
\author[L. Venturello]{Lorenzo Venturello}
\address[LV]{Department of Information Engineering and Mathematical Sciences, Universit\`a di Siena, Italy}
\email{lorenzo.venturello@unisi.it}
\date{\today}
\thanks{Klein was partially supported by NSF grant DMS-2246962. Nguy$\tilde{\text{\^e}}$n was partially supported by the NAFOSTED (Vietnam) grant 101.04-2023.07. Part of this work was done when Nguy$\tilde{\text{\^e}}$n was visiting the Vietnam Institute of Advanced Study in Mathematics (VIASM). He thanks VIASM for its hospitality. Singh was partially supported by the NBHM travel grant 0207/2/2023/R$\&$D-II/1854. Venturello was partially supported by INdAM-GNSAGA and by PRIN 2022S8SSW2}
\begin{document}
\begin{abstract}
  We investigate quotients by radical monomial ideals for which $T^2$, the second cotangent cohomology module, vanishes.
  The dimension of the graded components of $T^2$, and thus their vanishing,  depends only on the combinatorics of the corresponding simplicial complex.
  We give both a complete characterization and a full list of one dimensional complexes with $T^2=0$.
  We characterize the graded components of $T^2$ when the simplicial complex is a uniform matroid.
  Finally, we show that $T^2$ vanishes for all matroids of corank at most two and conjecture that all connected matroids with vanishing $T^2$ are of corank at most two.
\end{abstract}
\maketitle

\section{Introduction}
Let \K\ be an arbitrary field, and let $\K[\D]$ denote the Stanley-Reisner ring of a simplicial complex $\D$ on $[n]:=\{1,2,\ldots ,n \}$.
The cotangent cohomology modules of $\K[\D]$ have the structure of a \K-vector space.
The $\Z^n$-grading on $\K[\D]$ transfers to these modules, and the multigraded components are finite-dimensional vector spaces.
If one is interested in using cotangent cohomology to understand the deformations of Stanley-Reisner schemes, then some caution is needed when choosing the field \K.
In this paper, however, we are only interested in the vector space dimensions of the multigraded components of $T^2$.
These depend only on the combinatorics of $\D$ \cite{altmann2000stanleyreisner} because basis elements are in bijection with monomials related to $\D$.
For this reason, we will write simply $T^i(\D)$, without reference to the field, for the cotangent cohomology of $\K[\D]$. 

All obstructions for lifting first order deformations of $\textup{Spec}(\K[\D])$ are contained in $T^2$.
For embedded deformations of the associated projective scheme, these are located in $T^2_0$, the (total) degree zero component.
While $T^2$ may be in general larger than the obstruction space,
its vanishing has positive consequences for the deformations of these combinatorially defined schemes.
In particular, if $T^2$ vanishes, then the corresponding point on the Hilbert scheme is smooth.
For this reason, simplicial complexes with vanishing $T^2$ are said to be \emph{unobstructed}.
Note, however, that being unobstructed is not equivalent to  $T^2=0$, but is only a consequence thereof.

Monomial ideals with vanishing $T^2$ have been studied by other authors as well.
For instance, 
Christophersen and Ilten used degenerations of Mukai varieties to find unobstructed Fano Stanley-Reisner schemes and proved that the boundary complex of the dual polytope of the associahedron has trivial $T^2$ \cite{CI14}.
Ilten, N\'ajera Ch\'avez, and Treffinger proved the vanishing of $T^2$ for Stanley-Reisner rings associated to cluster complexes \cite{INT21}.
Nematbakhsh  identified special classes of quadratic monomial ideals with vanishing $T^2$ \cite{Nem16}, and
Fl{\o}ystad and Nematbakhsh studied letterplace ideals which are unobstructed \cite{FN18}.
Our approach here is to systematically look at low dimensional simplicial complexes with vanishing $T^2$, and to characterize matroids with vanishing $T^2$.

We start by introducing the minimally necessary terminology and background in Section~\ref{sec:prelim}.
Most notably, we recall the results of Altmann and Christophersen from~\cite{altmann2000stanleyreisner}, which we will use.
The two tools from that paper that will be most useful to us are the isomorphism  between the multigraded components of $T^i(\D)$ and those in purely non-positive multidegrees%
\footnote{~This means that all the entries of the multidegree $\bfb\in\Z^n$ are less than or equal to zero.}
of links of faces of $\D$  (see (\ref{eq:TiOfLink})) and the isomorphism between such components and the relative cohomology  of some combinatorially defined topological spaces (see Theorem~\ref{thm:TiAsRelativeCohomology}).

The main goal in Section~\ref{sec:lowDimensions} is to understand which one-dimensional complexes have a vanishing $T^2$.
For zero-dimensional complexes, $T^2=0$ is equivalent to having at most 3 vertices (see Lemma~\ref{lem:dim0T20}).
This gives us the first of three conditions which characterizes the vanishing of $T^2$ for one-dimensional complexes:
the local degree of each vertex is at most three.
Two more necessary conditions follow from the interpretation of relative cohomology (cf. Theorem~\ref{thm:TiAsRelativeCohomology}).
In Theorem~\ref{thm:unobstructedDim1} we show that the three conditions we found are also sufficient for the vanishing of $T^2$, thus obtaining a full theoretical characterization.
In Subsection~\ref{subsec:fullListDim1}, we use this characterization to work out a complete list of the one-dimensional simplicial complexes with vanishing $T^2$ (see Figure~\ref{fig:unobstructed1dim}).
 
In the last sections of this paper, we study $T^2(\D)$ when \D\ is a matroid, that is, when the faces of \D\ satisfy the independence axioms of matroids. In Proposition~\ref{prop:uniformT2}, we fully determine the purely non-positive multigraded components of $T^2$ for uniform matroids. Given that all links in a uniform matroid are also uniform, this gives a full characterization of $T^2$ for uniform matroids.
We will see that non-vanishing non-positively graded components can only be found in total degree $-2$,
and that uniform matroids have a vanishing $T^2$ if and only if they are of corank at most two.
In the final section, we prove that one direction of this equivalence holds for arbitrary matroids:
If a matroid $\M$ has corank at most two, then $T^2(\M)=0$ (see Proposition~\ref{prop:Corank2IsUnobstructed}).

If a simplicial complex is the join of two other complexes, then the cotangent cohomology modules that appear are closely related (see (\ref{eq:TofJoins})). In particular, $T^2$ of the join vanishes if and only if $T^2$ vanishes for the two joined complexes.
Matroids that are not the join (as simplicial complexes) of two other matroids are called connected.
We conjecture that every matroid with $T^2=0$ is the join of connected matroids of corank at most two.
In other words, if $T^2(\M)=0$ and $\M$ is connected, then $\textup{corank}(\M)\leqslant 2$ (Conjecture~\ref{conj:allUnobstructedMatroids}).

\subsection*{Acknowledgements}
We would like to thank the Banff International Research Station and in particular the organizers of the
\emph{Interactions Between Topological Combinatorics and Combinatorial Commutative Algebra} Workshop in April 2023:
Mina Bigdeli, 
Sara Faridi,
Satoshi Murai,
and Adam Van Tuyl,
for creating such an inspiring  working environment.  We are also very grateful to Ayah Almousa, who was part of our working group during the BIRS workshop. Her expertise and insights were invaluable.

We would also like to thank Jan Christophersen and Nathan Ilten for helpful communications, especially regarding \cite{CI14}.

\section{Preliminaries}
\label{sec:prelim}
\subsection{Combinatorics}
Let $E$ be a finite set.
An \deff{abstract simplicial complex}\footnote{~We will  usually drop the word \emph{abstract} and just use \deff{simplicial complex}.} on the vertex set $E$ is a subset $\Delta\subseteq 2^E$ of the power set of $E$, satisfying the condition:
\begin{enumerate}[label={({\bf I\arabic*})}]
\item\label{item:indep1}\textit{If $I\in \D$ and $J\subseteq I$, then $J\in\D$.}
\end{enumerate}
Unless otherwise stated,
we  assume that  $E=[n]=\set{1,\dots,n}$  for some positive integer $n$.
The subsets of $[n]$ which are elements of $\Delta$ are called \deff{faces} of $\Delta$.
The \deff{facets} are the faces which are maximal under inclusion.
A subset $C\subseteq [n]$ is a {nonface} of \D\ if $C\notin \D$;
if all proper subsets of $C$ are in \D, then $C$ is called a \deff{minimal nonface}.
A \deff{loop} is an element $v\in[n]$ with $\set{v}\notin \D$; equivalently, $v$ is not contained in any face of \D. A \deff{coloop} is a vertex $v\in [n]$ which is contained in every facet; alternatively, a coloop is not contained in any minimal nonface.
For every subset $W\subseteq[n]$, the \deff{restriction} of \D\ to  $W$ is the simplicial complex on $W$ given by
\[
  \D|_W := \Set{F \in \D\sodass F \subseteq W}.
\]
The \deff{deletion} of $W$ is the restriction to the complement of $W$ in $[n]$:
\[
  \D\sdif W :=\D|_{[n]\sdif W}.
\]

Given two simplicial complexes \D\ and $\Delta'$ on disjoint sets $E$ and $E'$, respectively, their \defi{join} is the simplicial on $E\sqcup E'$ given by
\[
  \Delta\ast \Delta' := \Set{F\sqcup F'\sodass F\in \Delta \text{~and~}F'\in\Delta'},
\]
where $\sqcup$ stands for the disjoint union. The \defi{link}\footnote{~This is a particular case of contraction, which can be defined for every subset $[n]$, not just for faces. For our purposes here, the link of a face will suffice.} of a face $F\in \Delta$ is defined as 
\[
    \link_\D{F} :=  \Set{A \in \D\sodass A \cap F = \emptyset\text{~and~} A \cup F \in \D}.
\]
For every finite set $F$, the abstract simplex on $F$ is $2^F=\Set{A\subseteq F}$. The \deff{star} of a face $F\in\Delta$ is
\[
  \str_\Delta F :=  2^F \ast \lk_\Delta F  = \Set{G \in\Delta \sodass F\cup G\in\Delta}.
\]

A \deff{matroid} is a nonempty\footnote{~This means that $\Delta\neq\emptyset$. If a simplicial complex satisfies $\Delta\neq \emptyset$, then by \ref{item:indep1} we have $\emptyset\in\Delta$. For matroids this last condition is usually included as an axiom.}
 simplicial complex \M\ whose faces satisfy the extra axiom: 
 \begin{enumerate}[label={({\bf I\arabic*})}, start=2]
 \item\label{item:indep2}\textit{If $I, J \in\M$  and $\card{J}<\card{I}$, then there exists $v\in I\sdif J$ such that $J\cup\set{v}\in\M$.}
 \end{enumerate} 
We will use \D\ to denote simplicial complexes that are not necessarily matroids and reserve the notation \M\ for matroids.
 We will call the faces of a matroid  \deff{independent sets} and the facets of a matroid \deff{bases}. The minimal nonfaces of a matroid are called \deff{circuits}.
In accordance with the matroid terminology, we will use the following notation for all simplicial complexes:
\begin{eqnarray*}
  \CC_{\D}&:=&\Set{C\subseteq [n]\sodass C\text{~is a minimal nonface of~}\D},\\
  \CB_\D&:=&\Set{B\subseteq [n]\sodass B\text{~is a facet  of~}\D}.
\end{eqnarray*}
 Matroids have equivalent characterizations in terms  of their circuits (cf. \cite[Section\,1.1]{Oxl11}) or of their bases (cf. \cite[Section\,1.2]{Oxl11}).

 \subsection{Algebra}
Recall that \K\ denotes a fixed arbitrary field throughout this document. Fix $n\in\Z_{>0}$. We denote by $S=\K[x_1,\dots,x_n]$ the polynomial ring in $n$ variables with coefficients in \K. To every simplicial complex \D\ on $[n]$, we associate a radical monomial ideal of $S$ called its \deff{Stanley-Reisner ideal}:
\[
  \textstyle
  I_\D :=\left\langle~  \prod_{i \in F} x_i \sodass F \in 2^{[n]}\sdif \D \right\rangle \subseteq S.
\]
This association is a one-to-one correspondence between  simplicial complexes on $[n]$ and radical monomial ideals of $S$. The quotient ring $\K[\D]=S/I_\D$ is called the \deff{Stanley-Reisner ring} of \D\ over the field \K.\\[1ex]
\indent We will now introduce the first and the second cotangent cohomology modules for Stanley-Reisner rings in the ad hoc way of \cite{altmann2000stanleyreisner}. For the general homological theory we refer to the books of Andr\'e \cite{And74} and of Loday \cite{Lod13}, and for the connection to deformation theory we refer to Hartshorne's book \cite{HAr09} and Sernesi's book \cite{Ser07}.
While some  algebraic structures related to Stanley-Reisner rings depend on the choice of field, the \K-vector space  dimensions of the cotangent cohomology modules depend only on the combinatorics of the complex \cite[Corollary\,1.4]{altmann2016rigidity}.
Since our focus is solely on these dimensions, we will, for simplicity, omit the phrase ``over \K''  in the following definitions.\\[1ex]
We define the  $S$-module of the $\K$-linear derivations of $S$ by:
\[
  \Der_\K(S,S):=\Set{\partial\in\Hom_\K(S,S) \sodass \partial(f g) = f \partial(g) + \partial(f) g,~~\forall~f,g \in S}.
\]
For any ideal $I\subseteq S$, the \deff{first cotangent cohomology module} $T^1(S/I)$ is the cokernel of the natural map $\Der_\K(S,S)\too\Hom_S(I,S/I)$.
To define $T^2(S/I)$, consider the first syzygy module of the monomial ideal $I$. That is, assuming $I$ is minimally generated by $m$ monomials, consider the kernel of the map $d$, which takes the canonical basis of $S^m$ to the unique set of minimal monomial generators of  $I$:
\[
  \begin{tikzcd}[column sep = small, row sep = 3em]
    0\ar[r]&\Syz(I)\ar[r]& S^m\ar[r]{}{d}&S\ar[r]&S/I\ar[r]&0.
  \end{tikzcd}
\]
Consider also the $S$-submodule $\Kosz(I)\subseteq \Syz(I)$ generated by the Koszul relations:
\[
  \Kosz(I) = \langle d(f)\cdot g - d(g)\cdot f \sodass f,g\in S^m\rangle.
\]
The quotient $S$-module $\Syz(I)/\Kosz(I)$ has an $S/I$-module structure.
The \deff{second cotangent cohomology module} $T^2(S/I)$ is the cokernel of the induced map
\[
  \Hom_S(S^m,S/I) \too \Hom_{S/I}\left(\frac{\Syz(I)}{\Kosz(I)},S/I\right).
\]

\subsection{Cotangent cohomology for simplicial complexes}

With the field \K\ fixed, we will denote simply by $T^{i}(\D)$ the cotangent cohomology of $S/I_\D$.
As $I_\D\subseteq S$ is a monomial ideal, the Stanley-Reisner ring $\K[\D]$, its resolution, and all the modules defined above are $\Z^n$-graded.
For $\bfc\in\Z^n$ and $i=1,2$, we denote the $\Z^n$-graded components of the cotangent cohomology modules by
\[
  T^i_\bfc(\Delta).
\]
The support of $\bfa=(a_1,\dots,a_n)\in\N^n$ is defined as the set $\supp\bfa = \Set{i\in[n]\sodass a_i\neq 0}\subseteq [n]$. Every  $\bfc\in\Z^n$ has a unique decomposition as
\[
  \bfc=\bfa-\bfb \quad\text{with ~~$\bfa, \bfb\in\N^n$ ~~and~~ $\supp \bfa \cap \supp \bfb = \emptyset$.}
\]
We paraphrase the following result. Note that a vector $\bfb\in\{0,1\}^n$ is uniquely determined by its support. 
\begin{lemma}[{\cite[Lemma 2]{altmann2000stanleyreisner}}]
  \label{lem:L2AC}
  The modules $T^i_{\bfa-\bfb}$ vanish unless $0\neq\bfb\in\{0,1\}^n$, $\supp\bfa\in\D$ and $\supp \bfb \subseteq [\lk_\D \supp \bfa]$\footnote{~Where $[\D]=\Set{v\in[n]\sodass v\in\D}$ denotes the set of vertices appearing in $\D$.}.
With these conditions  fulfilled, $T^i_{\bfa-\bfb}$  depends only on $\supp\bfa$ and \bfb.
\end{lemma}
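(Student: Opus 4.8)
The plan is to work one $\Z^n$-graded component at a time. Since $\K[\D]=S/I_\D$, its resolution, and every module in the definitions of $T^1$ and $T^2$ are $\Z^n$-graded with grading-preserving structure maps, each $T^i$ inherits a $\Z^n$-grading and it suffices to analyze $T^i_\bfc$ for a fixed $\bfc=\bfa-\bfb$, $\supp\bfa\cap\supp\bfb=\emptyset$. I will use repeatedly that the graded pieces of $\K[\D]$ are explicit: $(\K[\D])_\bfd=\K\,x^\bfd$ is one-dimensional when $\bfd\in\N^n$ and $\supp\bfd\in\D$, and is $0$ otherwise. The guiding principle is that each $T^i_\bfc$ is by definition a \emph{cokernel}, so it vanishes as soon as its target vanishes in degree $\bfc$, and the target is controlled by images of generators: for $i=1$ a homogeneous $\phi\in\Hom_S(I_\D,\K[\D])$ of degree $\bfc$ is determined by the values $\phi(m_j)\in(\K[\D])_{\bfc+\bfg_j}$, where $m_j$ ranges over the minimal generators of $I_\D$ and $\bfg_j=\deg m_j$; for $i=2$ the same holds with $m_j$ replaced by a minimal generating set of $\Syz(I_\D)/\Kosz(I_\D)$, whose degrees I record below (passing to the $\Kosz$-quotient only drops generators, never introducing new degrees).

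The three coarse conditions then come from coordinatewise bookkeeping on the targets $(\K[\D])_{\bfc+\bfg_j}$. First, every generating degree $\bfg_j$ is squarefree: for $i=1$ this is clear since $\bfg_j$ is the indicator of a minimal nonface, and for $i=2$ it follows from Hochster's formula, which places the multigraded Betti numbers of $\K[\D]$ in squarefree degrees. If some $\bfb_i\ge 2$ then, as $\bfa_i=0$ there, no squarefree $\bfg_j$ can make $(\bfc+\bfg_j)_i\ge 0$, so every target piece vanishes and $T^i_\bfc=0$; hence $\bfb\in\{0,1\}^n$. Second, on $\supp\bfa$ we have $(\bfc+\bfg_j)_i=\bfa_i+(\bfg_j)_i>0$, so any nonzero target has support containing $\supp\bfa$; since supersets of nonfaces are nonfaces, $\supp\bfa\notin\D$ would force every target to vanish, giving $\supp\bfa\in\D$. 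Third, each $\supp\bfg_j$ is a nonface (for $i=1$ a minimal nonface; for $i=2$ because a face would make $\D|_{\supp\bfg_j}$ a contractible simplex with no reduced homology, contradicting $\beta_{2,\bfg_j}\neq 0$). Thus when $\bfb=0$, i.e.\ $\bfc=\bfa\in\N^n$, we get $\supp(\bfa+\bfg_j)\supseteq\supp\bfg_j$ a nonface for every $j$, all targets vanish, and $T^i_\bfa=0$; hence $\bfb\neq 0$.

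It remains to show that $T^i_{\bfa-\bfb}$ depends only on $\sigma:=\supp\bfa$ and $\bfb$, and that $\supp\bfb\subseteq[\lk_\D\sigma]$. For the dependence I would fix $i\in\sigma$ and use multiplication by $x_i$, a degree-$e_i$ endomorphism of $\K[\D]$ that acts $\K[\D]$-linearly and hence as a chain map on the whole complex computing $T^i$. In every term occurring in degrees $\bfc$ and $\bfc+e_i$ the relevant graded pieces of $\K[\D]$ (and of $S$, in the derivation term) are one-dimensional and nonzero, since their $i$-th coordinate is already $\ge \bfa_i\ge 1$; so $x_i$ is a termwise isomorphism there and induces $T^i_{\bfa-\bfb}\xrightarrow{\ \sim\ }T^i_{\bfa+e_i-\bfb}$. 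Any two $\bfa,\bfa'$ with $\supp\bfa=\supp\bfa'=\sigma$ are joined by a chain of such $\pm e_i$ steps keeping the support equal to $\sigma$, so the component depends only on $(\sigma,\bfb)$. Reading these isomorphisms as invariance under inverting $x_\sigma:=\prod_{i\in\sigma}x_i$, one identifies the component with a single multigraded piece of $T^i\big(\K[\D]_{x_\sigma}\big)$, and the decomposition $\K[\D]_{x_\sigma}\cong \K[\lk_\D\sigma]\otimes_\K\K[x_i^{\pm1}:i\in\sigma]$ (inverting $x_\sigma$ restricts to the star $\str_\D\sigma=2^\sigma\ast\lk_\D\sigma$, a cone over the link) reduces everything to the link in degree $-\bfb$, with $\bfa$ replaced by $0$. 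Re-running the coarse analysis over $\lk_\D\sigma$ then forces $\supp\bfb\subseteq[\lk_\D\sigma]$: if some $v\in\supp\bfb$ were a loop of the link, i.e.\ $\sigma\cup\{v\}\notin\D$, then $x_v$ would be a minimal generator of $I_{\lk_\D\sigma}$ splitting off as a smooth (polynomial) direction, so $T^i$ of the link cannot be charged negatively along $v$.

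The main obstacle is the reduction to the link in the last paragraph. Making the localization rigorous requires knowing that $T^i$ commutes with the localization $\K[\D]\to\K[\D]_{x_\sigma}$ and with the smooth (Laurent) base change, \emph{compatibly with the $\Z^n$-grading}, and that the localization map is an isomorphism—not merely a map—onto the component in question. Establishing this base-change statement, and with it the clean separation of the free ``loop'' directions that produces $\supp\bfb\subseteq[\lk_\D\sigma]$, is the technical heart; the coarse vanishing conditions and the scaling isomorphisms above are then bookkeeping.
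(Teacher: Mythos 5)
First, a point of comparison: the paper itself offers \emph{no} proof of Lemma~\ref{lem:L2AC}; it is imported verbatim (``We paraphrase the following result'') from \cite[Lemma~2]{altmann2000stanleyreisner}. So your proposal can only be judged on its own terms, and on those terms the first two thirds are sound. The coarse bookkeeping is correct: the generating degrees of $I_\D$ and of $\Syz(I_\D)/\Kosz(I_\D)$ are squarefree with nonface support (via Hochster's formula or, more elementarily, via the Taylor complex, whose syzygy degrees are lcm's of squarefree monomials), and since each $T^i_\bfc$ is a quotient of the degree-$\bfc$ piece of the relevant $\Hom$ module, the vanishing unless $0\neq\bfb\in\{0,1\}^n$ and $\supp\bfa\in\D$ follows exactly as you argue. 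The $\cdot\,x_i$ scaling isomorphisms giving dependence only on $(\supp\bfa,\bfb)$ are also essentially right, though stated loosely: the $\Hom$ spaces embed into, but do not equal, the products of graded pieces $(\K[\D])_{\bfc+\bfg_j}$, so surjectivity of $\cdot\,x_i$ requires checking that dividing the values by $x_i$ again yields a well-defined homomorphism (this uses that $x_i$-multiplication is injective on $(\K[\D])_\bfd$ whenever $\bfd_i\geqslant 1$), followed by a short diagram chase on cokernels. These are fillable details.

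The genuine gap is the third necessary condition, $\supp\bfb\subseteq[\lk_\D\supp\bfa]$, which is exactly where you invoke unproven machinery: that $T^i$ commutes with inverting $x_\sigma$, compatibly with the $\Z^n$-grading, and that the Laurent factor then splits off \`a la K\"unneth so that loops of the link kill all negatively charged components. You flag this yourself as ``the technical heart,'' and that is the problem: within the paper's ad hoc cokernel definitions of $T^1$ and $T^2$, this base-change statement is not available off the shelf, and it is at least as deep as the link formula~(\ref{eq:TiOfLink}) (i.e.\ \cite[Proposition~11]{altmann2000stanleyreisner}) and the join formula \cite[Proposition~2.3]{CI14}, both of which the paper also only cites. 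In effect, your proof of the lemma's hardest clause assumes results of the same strength as that clause. A self-contained repair must stay at the level of your bookkeeping: for $v\in\supp\bfb$ with $\supp\bfa\cup\{v\}\notin\D$, pick a minimal nonface $C\subseteq\supp\bfa\cup\{v\}$; then $v\in C$ and $C\sdif\{v\}\subseteq\supp\bfa$. For $T^1$ this works quickly: after subtracting a multiple of the image of the derivation $x^{\bfa}\partial_v$, the Taylor syzygy between $x^{\bfe_C}$ and any other generator $m_j$ multiplies $\phi(m_j)$ by a monomial supported inside $\supp\bfa\cup\supp\bfg_j$, hence does not change its support, forcing $\phi(m_j)=0$. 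For $T^2$ one must additionally track which Taylor syzygies become Koszul in $\Syz/\Kosz$ and argue on the surviving generators --- this is precisely the content of the Altmann--Christophersen proof, and it is the part your proposal leaves open.
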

 \noindent Furthermore, in \cite[Proposition~11]{altmann2000stanleyreisner} it is shown that  a combinatorial interpretation for the case $\bfa=0$ is enough. In particular, if  $A=\supp\bfa$, then we have for $i=1, 2$ that
\begin{equation}
  \label{eq:TiOfLink}
  T^i_{\bfa - \bfb}(\D) = T^i_{-\bfb}(\lk_\D A).
\end{equation}
\begin{convention}
  Throughout this paper $\bfb$ will always  denote a 0-1 vector, and we will use the same notation for its support. So, according to context, we may have 
  \[
    \bfb\in\set{0,1}^n \quad\text{or}\quad\bfb\subseteq [n].
    \]    
\end{convention}
\noindent To present the combinatorial characterization of $T^i_{-\bfb}(\D)$ from \cite{altmann2000stanleyreisner} we need to define two sets:
\begin{definition}
  \label{def:NandNtilde}
Let $\Delta$ be a simplicial complex on $[n]$ and $\bfb \subseteq [n]$. We define
\begin{align*}
    & N_{\bfb}(\D) := \Set{F  \in \D \sodass F \cap \bfb = \emptyset \text{~and~} F \cup \bfb \notin \D}  \text{ and }\\
    & \wt{N}_{\bfb}(\D) := \Set{F  \in N_{\bfb}(\D) \sodass \exists~ \bfb' \subsetneq \bfb \text{ with } F\cup \bfb' \notin \D}.
\end{align*}
\end{definition}
\begin{remark}
  \label{rem:Ndel}
  The above  definition implies
  \[
    N_{\bfb}(\D)=
    \begin{cases}
      \D\sdif \str_\D \bfb&\text{if~}\bfb\in\D,\\
      \D\sdif \bfb &\text{if~}\bfb\notin\D.      
    \end{cases}\\
  \]
\end{remark}
\noindent For every nonempty set $F\subset [n]$ one assigns the \defi{relatively open simplex} $\opensimplex{F}\subseteq\R^n$ as
 \[
   \textstyle
  \opensimplex{F} := \Set{\alpha:[n]\too[0,1]\sodass \sum_{i=1}^n\alpha(i)=1 \text{~and~} (\,\alpha(i)\neq 0 \iff i\in F\,)}.
\]
Each collection of subsets $\Gamma\subseteq 2^{[n]}$ thus determines a topological space in the following way:
\[
  \opensimplex{\Gamma}=
  \begin{cases}
\textstyle    \bigcup_{F\in\Gamma} \opensimplex{F}&\text{if~}\emptyset\notin \Gamma,\\[1ex]
\textstyle    \cone\left(\bigcup_{F\in\Gamma}\opensimplex{F}\right)&\text{if~}\emptyset\in \Gamma.\\    
  \end{cases}
\]
The ``usual geometric representation'' of a simplicial complex \D\ is  $\opensimplex{\D\sdif\Set{\emptyset}}$.\\[1ex]
Many of our proofs rely  on the following theorem of Altmann and Christophersen.
\begin{theorem}[{\cite[Theorem 9]{altmann2000stanleyreisner}}]
  \label{thm:TiAsRelativeCohomology}
  Let \D\ be a simplicial complex on $[n]$ and $\bfb\in\Set{0,1}^n$, which we will identify  with its support. If $\card{\bfb}>1$, then $T^i_{-\bfb}(\D)$ is given by the following relative cohomology modules
  \[
    T^i_{-\bfb}(\D) \simeq H^{i-1}(\opensimplex{N_{\bfb}(\D)},\opensimplex{\wt{N}_{\bfb}(\D)},\K)\qquad\text{for }i=1, 2.
  \]
If $\# \bfb=1$, then the above formula holds if we use the reduced relative cohomology instead.   
\end{theorem}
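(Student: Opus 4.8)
The plan is to compute each graded piece $T^i_{-\bfb}(\D)$ directly from the Lichtenbaum--Schlessinger presentation recalled above and then to recognize the result as a relative cohomology group. Because $I_\D$ is a monomial ideal, the three-term complex $C^0 \to C^1 \to C^2$ with $C^0 = \Der_\K(S, S/I_\D)$, $C^1 = \Hom_S(S^m, S/I_\D)$, and $C^2 = \Hom_{S/I_\D}(\Syz(I_\D)/\Kosz(I_\D), S/I_\D)$ --- whose cohomology in cohomological degrees $0,1,2$ computes $T^0,T^1,T^2$ --- is $\Z^n$-graded. I would therefore fix $\bfb \in \{0,1\}^n$ and pass to the finite-dimensional subcomplex $C^\bullet_{-\bfb}$ in multidegree $-\bfb$. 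The only grading input needed is that $(S/I_\D)_\bfc$ is one-dimensional precisely when $\bfc \in \N^n$ with $\supp \bfc \in \D$, and is zero otherwise; this equips each $C^j_{-\bfb}$ with an explicit combinatorial basis, where the summand attached to a minimal generator $x^C$ contributes exactly when $\bfb \subseteq C$, so that $C \setminus \bfb$ is a face with $(C \setminus \bfb) \cup \bfb \notin \D$, i.e. a member of $N_\bfb(\D)$.

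The key preliminary observation, and the source of the reduced/unreduced dichotomy, concerns the augmentation term $C^0_{-\bfb}$. A $\K$-derivation of degree $-\bfb$ is determined by the images $\partial(x_i) \in (S/I_\D)_{\bfe_i - \bfb}$, and $\bfe_i - \bfb \in \N^n$ is possible only when $\bfb = \bfe_i$ is a single vertex; hence $C^0_{-\bfb} = 0$ whenever $\card{\bfb} > 1$, while $C^0_{-\bfb} \cong \K$ when $\card{\bfb} = 1$. Reindexing by the shift $D^j := C^{j+1}_{-\bfb}$, so that $H^j(D^\bullet) = T^{j+1}_{-\bfb}$ --- the discrepancy between $i$ and $i-1$ reflecting that the generators of $I_\D$ are presented in homological degree one --- places the generator term in cohomological degree $0$, the syzygy term in degree $1$, and the derivation term $C^0_{-\bfb}$ in degree $-1$. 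This degree $-1$ slot is exactly an augmentation that is present precisely when $\card{\bfb}=1$; and in that same case one checks directly that $\wt{N}_\bfb(\D) = \emptyset$, since the only proper subset $\bfb' \subsetneq \bfb$ of a singleton is $\bfb' = \emptyset$, for which $F \cup \bfb' = F \in \D$. Thus for $\card{\bfb} = 1$ the pair degenerates to a single space and the augmentation forces the switch to reduced cohomology, whereas for $\card{\bfb} > 1$ the augmentation vanishes and one obtains ordinary relative cohomology, matching the two cases of the statement.

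It then remains to identify $D^\bullet$, in the range computing $H^0$ and $H^1$, with the relative simplicial cochain complex of the pair $(\opensimplex{N_\bfb(\D)}, \opensimplex{\wt{N}_\bfb(\D)})$: the cochains of the ambient space are indexed by the faces $F$ with $F \cap \bfb = \emptyset$ and $F \cup \bfb \notin \D$, that is by $N_\bfb(\D)$, while $\wt{N}_\bfb(\D)$ isolates those faces for which $F \cup \bfb' \notin \D$ already holds for some proper $\bfb' \subsetneq \bfb$, and quotienting out their cochains is exactly what turns the absolute complex into the relative one. The step I expect to be the main obstacle is matching the algebraic differential --- assembled from the syzygies $\Syz(I_\D)$ modulo the Koszul relations $\Kosz(I_\D)$ together with the restriction maps between the $\Hom$-groups --- with the topological coboundary on the relatively open simplices; this is where the Koszul sign conventions must be reconciled with a coherent choice of orientations, and where one must check that the syzygy term $C^2_{-\bfb}$ contributes precisely the cochains needed to compute $H^1$. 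Once the complexes are identified, taking cohomology and reading off degree $i-1$ for $i=1,2$ yields $T^i_{-\bfb}(\D) \simeq H^{i-1}(\opensimplex{N_\bfb(\D)}, \opensimplex{\wt{N}_\bfb(\D)}, \K)$, reduced when $\card{\bfb}=1$, as claimed.
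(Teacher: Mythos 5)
First, a structural point: the paper does not prove this statement at all --- it is imported wholesale from \cite[Theorem 9]{altmann2000stanleyreisner} --- so your argument has to stand entirely on its own. Your preliminary steps do stand: the multidegree $-\bfb$ strand of the Lichtenbaum--Schlessinger complex is the right object; the computation that $\Der_\K(S,S/I_\D)_{-\bfb}$ vanishes for $\card{\bfb}>1$ and is one-dimensional for $\bfb=e_i$ is correct; and so is the observation that $\wt{N}_{\bfb}(\D)=\emptyset$ exactly when $\bfb$ is a singleton. Together these genuinely explain the reduced/unreduced dichotomy in the statement.

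The gap is the step you defer as reconciling ``Koszul sign conventions with orientations'': what is missing there is not a sign check but the entire content of the theorem, and the identification you propose would in fact fail. The graded piece $\Hom_S(S^m,S/I_\D)_{-\bfb}$ has basis indexed by the circuits $C\in\CC_\D$ with $\bfb\subseteq C$, equivalently by the faces $F=C\sdif\bfb$ with $F\cup\bfb\in\CC_\D$; this is in general a \emph{proper} subset of $N_\bfb(\D)$ (your text conflates the two), and its elements are faces of arbitrary dimension, whereas relative $0$-cochains of the pair $(\opensimplex{N_{\bfb}(\D)},\opensimplex{\wt{N}_{\bfb}(\D)})$ are spanned by $0$-cells. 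Concrete test: $\D=\partial\Delta^3$, the boundary of the tetrahedron, and $\bfb=\{1,2\}$. Then $N_\bfb=\{\{3,4\}\}$ and $\wt{N}_\bfb=\emptyset$, so $\opensimplex{N_\bfb}$ is a single \emph{open} $1$-cell, while the unique algebraic generator in degree $-\bfb$ is indexed by the edge $\{3,4\}$ and sits in your $D^0$. If one grades cochains by cell dimension, that generator lands in degree $1$, and the resulting complex computes the compactly supported invariant of the open interval ($H^0_c=0$, $H^1_c=\K$), contradicting both the theorem and the algebra: here $\K[\D]$ is a hypersurface, so $T^1_{-\bfb}=\K$ and $T^2_{-\bfb}=0$, matching the \emph{ordinary} singular cohomology $H^0=\K$, $H^1=0$ that the theorem (and the paper's later uses of it, e.g.\ Remark~\ref{rem:T2forCircuits}, where a cone has $H^0=\K$) intends. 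So the strand $D^\bullet$ is not the relative simplicial cochain complex of the pair under any grading by cell dimension; it is a \v{C}ech/poset-style complex built from the circuits containing $\bfb$ and the syzygies among them, and proving that its cohomology agrees with $H^{i-1}(\opensimplex{N_{\bfb}},\opensimplex{\wt{N}_{\bfb}},\K)$ --- which also requires combinatorial control of $\Syz(I_\D)/\Kosz(I_\D)$ in degree $-\bfb$ that you never supply --- is precisely the substance of Altmann--Christophersen's proof.
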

\noindent As a consequence, we can compute $T^1_{-\bfb}$ and $T^2_{-\bfb}$ for $\# \bfb>1$ from the following long exact sequence:
\begin{equation}
    \label{eq:les}
      \begin{tikzcd}[column sep = small, row sep = 3em]
        0\ar[r]&T^1_{-\bfb}(\D)\ar[r]&H^0\braket{N_{\bfb}}\ar[r]&H^0\braket{\wt{N}_{\bfb}}\ar[r]&T^2_{-\bfb}(\D)\ar[r]&H^1\braket{N_{\bfb}}\ar[r]&H^1\braket{\wt{N}_{\bfb}} ,
\end{tikzcd}    
\end{equation}
where we write $H^i\braket{ \bullet }$ instead of  $H^i(\braket{ \bullet })$ to improve readability.

\subsection{Remarks on $T^2(\D)$}
The following easy remarks will be used repeatedly, so it is convenient to state and to prove them.

\begin{remark}
  \label{rem:T2ei}
  For every simplicial complex $\D$ and each canonical basis vector $e_i$, we have
\[
  T^2_{-e_i}(\D) \simeq H^1\braket{N_{e_i}}.
\]
In particular, if $\Set{i}\notin \D$, then $\braket{N_{e_i}}$ is a cone, thus $T^2_{-e_i}(\D)=0$.
\end{remark}
\begin{proof}
When $\#\bfb=1$, that is when $\bfb = e_i$ for some $i$, we just need to replace cohomology with reduced cohomology in (\ref{eq:les}). The remark follows, because the only proper subset of a one-element set is the empty set, which implies $\wt{N}_{e_i}=\emptyset$. 
\end{proof}
\begin{remark}
  \label{rem:nonMinimalFace}
  If $\bfb\notin\D$ and $\bfb\notin\CC_\D$, then $T^2_{-\bfb}(\D)=0$.  
\end{remark}
\begin{proof}
  This is a direct consequence of Theorem~\ref{thm:TiAsRelativeCohomology} and the fact that $N_\bfb=\wt{N}_\bfb$ in this case.
\end{proof}
\begin{remark}
  \label{rem:T2forCircuits}
  If $\bfb\in\CC_\D$ and if $\wt{H}^0$ denotes \emph{reduced} cohomology, then
  \begin{equation}
    \label{eq:T2circuit}
    \dim_\K T^2_{-\bfb}(\D)= \dim_\K \wt{H}^0\braket{\wt{N}_\bfb}.
  \end{equation}
\end{remark}
\begin{proof} The case $\bfb=\{i\}\notin \D$ was covered in the second part of Remark~\ref{rem:T2ei}, so we may assume $\#\bfb\geqslant 2$.
  From $\bfb\in\CC_\D$ we get $N_\bfb=\D\sdif \bfb$ (cf. Remark~\ref{rem:Ndel}).
  In particular $\emptyset\in\D$,  so by definition $\braket{N_\bfb}$ is a cone.
  This means, that
  \[
    H^0\braket{N_\bfb}\simeq
    \begin{cases}
      \K&\text{if~}\D\sdif\bfb\neq \emptyset\\
      0&\text{if otherwise}
    \end{cases}
    \qquad
    \text{and}
    \qquad
    H^1\braket{N_\bfb}=0.
  \]
  So, by (\ref{eq:les}), $T^2_{-\bfb}(\D)$ is the cokernel of the  map from $H^0\braket{N_\bfb}$ to $H^0\braket{\wt{N}_{\bfb}}$. 
  This map is the zero map only when $H^0\braket{\wt{N}_{\bfb}}=0$. In other words, this map has maximal rank. So, from~(\ref{eq:les}) we get the desired formula for the dimension of $T^2_{-\bfb}(\D)$.
\end{proof}

\begin{remark}
  \label{rem:unobstructedThenConnected}
  If $T^2(\D)=0$ for a simplicial complex \D\ of dimension at least 1, then \D\ is either connected, or it has two connected components, with one of them being an isolated vertex.
\end{remark}
\begin{proof}
  Assume there exist two simplicial complexes $\D_1$ and $\D_2$, each containing at least two vertices, such that
  \[
    \D = \D_1 \cup \D_2 \quad\text{and}\quad \D_1\cap \D_2=\Set{\emptyset}.
  \]
  In other words, no face of $\D$ contains vertices both from $\D_1$ and also from $\D_2$. Choose one vertex from each: $v_1\in\D_1$ and $v_2\in \D_2$. Our claim is that for $\bfb=\Set{v_1,v_2}$ we get $T^2_{-\bfb}(\D)\neq 0$.\\
  Indeed, by Remark~\ref{rem:T2forCircuits}, we have to show that $\braket{\wt{N_\bfb}}$ has two connected components. By  $\D_1\cap\D_2=\Set{\emptyset}$ we get that any nonempty face $F\in N_\bfb$ is either in $\D_1$ or in $\D_2$, and that  $F\cup\Set{v_1}\notin\D$ or $F\cup\Set{v_2}\notin\D$. 
  Thus $\wt{N}_\bfb =N_\bfb\sdif\Set{\emptyset}$, which means that $\braket{\wt{N}_{\bfb}}$ is the geometric realization of $\D\sdif \bfb$. The latter  is the disjoint union of the geometric realizations of the nonempty simplicial complexes $\D_1\sdif\Set{v_1}$ and $\D_2\sdif\Set{v_2}$. We thus have $\dim_\K H^0\braket{\wt{N}_\bfb}\geqslant 2$.
\end{proof}
\section{One-dimensional complexes with vanishing $T^2$}
\label{sec:lowDimensions}

It follows from (\ref{eq:TiOfLink})  that if  $T^2(\D)=0$, then $T^2$ vanishes for all links of $\D$.
For this reason, we begin by characterizing the vanishing of  $T^2$ for complexes of  dimensions zero and one.
\subsection{Characterization in dimensions zero and one}
\label{subsec:dim0and1}
The first step is straightforward, as the following lemma shows.
\begin{lemma}
    \label{lem:dim0T20}
    If $\D$ is a  zero-dimensional simplicial complex on $[n]$ with no loops\footnote{~Recall that having no loops means $\{i\}\in\D$ for all $i\in[n]$.}, then 
    \[
\dim_\K    T^2_{\bfa-\bfb}(\D)=
    \begin{cases}
      \max\{n-3,0\} & \text{~if~}\bfa=0 \text{~and~}\card{\bfb}=2,\\
      0 & \text{~if otherwise.}
    \end{cases}
  \]
  In particular, $T^2(\D)=0$  if and only if $n\leqslant 3$.
\end{lemma}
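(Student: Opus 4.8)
The plan is to compute $T^2_{\bfa-\bfb}(\D)$ directly for every multidegree, exploiting that a loop-free zero-dimensional complex has as its faces exactly $\emptyset$ and the singletons $\set{1},\dots,\set{n}$. First I would reduce to the case $\bfa=0$. By Lemma~\ref{lem:L2AC}, a component $T^2_{\bfa-\bfb}(\D)$ can be nonzero only if $A:=\supp\bfa\in\D$, $0\neq\bfb\in\set{0,1}^n$, and $\supp\bfb\subseteq[\lk_\D A]$; by~(\ref{eq:TiOfLink}) it then equals $T^2_{-\bfb}(\lk_\D A)$. When $\bfa\neq 0$, the face $A=\set{v}$ is a single vertex, and $\lk_\D\set{v}=\set{\emptyset}$ has empty vertex set, which forces $\supp\bfb=\emptyset$, a contradiction. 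Hence every nonzero component occurs at $\bfa=0$, and it remains to analyze $T^2_{-\bfb}(\D)$, which I would do in three cases according to $\card{\bfb}$.

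If $\card{\bfb}=1$, say $\bfb=\set{i}$, then Remark~\ref{rem:T2ei} gives $T^2_{-\bfb}(\D)\simeq H^1\braket{N_{\set{i}}}$, and a short computation identifies $N_{\set{i}}(\D)=\set{\set{j}\sodass j\neq i}$ as a finite set of points, whose $H^1$ vanishes. If $\card{\bfb}\geqslant 3$, then $\bfb$ is a nonface whose $2$-element subsets are already nonfaces, so $\bfb\notin\D$ and $\bfb\notin\CC_\D$; Remark~\ref{rem:nonMinimalFace} then yields $T^2_{-\bfb}(\D)=0$. The only contribution therefore comes from $\card{\bfb}=2$, say $\bfb=\set{i,j}$: all proper subsets of $\bfb$ lie in $\D$ while $\bfb$ does not, so $\bfb\in\CC_\D$, and Remark~\ref{rem:T2forCircuits} applies to give $\dim_\K T^2_{-\bfb}(\D)=\dim_\K\wt{H}^0\braket{\wt{N}_\bfb}$.

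The crux --- and the step I expect to demand the most care --- is computing $\wt{N}_\bfb$ in this last case. I would show $\wt{N}_{\set{i,j}}(\D)=\set{\set{k}\sodass k\neq i,j}$, a disjoint union of $n-2$ points: the empty face lies in $N_\bfb$ but drops out of $\wt{N}_\bfb$ because every proper subset of $\bfb$ is a face of $\D$, whereas each $\set{k}$ survives since $\set{k}\cup\set{i}\notin\D$. The reduced zeroth cohomology of $n-2$ points then has dimension $\max\set{(n-2)-1,\,0}=\max\set{n-3,0}$, matching the claim; here the $\max$ and the small cases $n=2,3$ (where $\braket{\wt{N}_\bfb}$ is empty or a single point and hence $\wt{H}^0=0$) have to be treated explicitly. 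Collecting the four cases gives the dimension formula, and the final equivalence is immediate: $T^2(\D)=0$ holds exactly when the sole possibly nonzero value $\max\set{n-3,0}$ vanishes, i.e.\ when $n\leqslant 3$.
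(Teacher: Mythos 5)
Your proposal is correct and follows essentially the same route as the paper's own proof: reduce to $\bfa=0$ via Lemma~\ref{lem:L2AC} and the link formula, then split into the cases $\card{\bfb}=1$, $\card{\bfb}=2$, and $\card{\bfb}\geqslant 3$, handled respectively by Remark~\ref{rem:T2ei}, Remark~\ref{rem:T2forCircuits} (with $\wt{N}_\bfb$ identified as $n-2$ points), and Remark~\ref{rem:nonMinimalFace}. Your explicit computation of $N_{\set{i}}$ and $\wt{N}_{\set{i,j}}$ and your treatment of the small cases $n=2,3$ just spell out details the paper leaves implicit.
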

\begin{proof}
    By Lemma~\ref{lem:L2AC} it is enough to check all degrees $\bfa-\bfb$ with $\supp\bfa\in \D$ and $\bfb\in\{0,1\}^n$. \\[1ex]
    If $\card{\supp\,\bfa}=1$, then $\link_\D \supp\bfa= \{\emptyset\}$, so $T^2_{\bfa-\bfb}(\D)=0$ for all $\bfb$. 
    Assume  from  now  that $\bfa=0$.\\[1ex]
    If  $\card{\bfb}=1$,  then  $\braket{N_b}$ is the usual geometric realization of $\D\sdif\bfb$. By Remark~\ref{rem:T2ei} we get $T^2_{-\bfb} = 0$.\\[1ex]
    If $\card{\bfb}=2$, then $\bfb\in\CC_\D$. By Remark~\ref{rem:T2forCircuits} we have to find the number of connected components of $\braket{\wt{N}_\bfb}$, which is a union of $n-2$ distinct points. So by~(\ref{eq:T2circuit}) we get
    \[
      \dim_\K T^2_{-\bfb}(\D) = \max\Set{n-3,0}.
    \]
    If $\card{\bfb}\geqslant 3$, then $\bfb\notin\D$ and $\bfb\notin\CC_\D$, so $T^2_{-\bfb}(\D)=0$ by Remark~\ref{rem:nonMinimalFace}.
\end{proof}
\noindent We recall the following definitions.
The \deff{local degree} of a vertex is the number of edges incident to it. 
A vertex $w\in[n]$ is a \deff{neighbour} (in \D) of the vertex $v\in[n]$, if $\Set{v,w}\in\D$.
We call the set of all neighbors of $v$ the \deff{neighborhood} of $v$ and denote it by $\nu(v)$. Note that, by our definition, if $v\in\D$, then $v\in\nu(v)$, in particular, the cardinality of $\nu(v)$ is one more than the local degree of $v$. For a subset $M$ of $[n]$ we denote by $\nu(M)$ the union of the neighborhoods of all its elements, and we say that $M$ is \deff{dominating} if
  \[
    \textstyle
   \nu(M):= \bigcup_{v\in M} \nu(v) = [n].
  \]
  For $\bfb\subseteq[n]$, we denote the union of $\bfb$ with the set of common neighbours of its elements  by
  \[
    \textstyle
    \widehat{\bfb}:=\bfb \cup \left(\bigcap_{v\in\bfb}\nu(v)\right).
  \]
  A \deff{cycle} in $\D$ is a sequence of $c$ distinct  vertices $(v_1,\dots,v_c)$, with $c\geqslant 3$, such that $\Set{v_i,v_{i+1}}\in\D$ for all $i=1,\dots,c$, where $v_{c+1}=v_1$.
\begin{theorem}
\label{thm:unobstructedDim1}
  A one-dimensional simplicial complex $\Delta$ on $[n]$ satisfies $T^2(\D)=0$ if and only if the following three conditions hold:
  \begin{enumerate}[label=(\roman*),font=\upshape, itemsep=5pt]
\item\label{item:unobstr1i} Every vertex in \D\ has local degree at most three.
\item\label{item:unobstr1ii} Every cycle in \D\ is a dominating set.
\item\label{item:unobstr1iii} For every minimal nonface $\bfb\in\CC_\D$ with $\#\bfb=2$,  the simplicial complex  $\D\sdif\widehat{\bfb}$ is connected.
\end{enumerate}
\end{theorem}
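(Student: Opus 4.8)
The plan is to reduce the global statement $T^2(\D)=0$ to a finite check over multidegrees via Lemma~\ref{lem:L2AC} and the link formula \eqref{eq:TiOfLink}, and then to match each surviving component to one of the three conditions. Since $\dim\D=1$, the only faces $A$ whose links must be inspected are $\emptyset$, the vertices, and the edges. Links of edges are points, so their $T^2$ vanishes automatically. For a vertex $v$, the link $\lk_\D v$ is a loop-free zero-dimensional complex on the vertex set $\nu(v)\sdif\{v\}$, of cardinality $\deg(v)$; by \eqref{eq:TiOfLink} and Lemma~\ref{lem:dim0T20}, $T^2(\lk_\D v)=0$ is equivalent to $\deg(v)\leqslant 3$, which is exactly condition~\ref{item:unobstr1i}. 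Both directions of the equivalence for \ref{item:unobstr1i} are thus immediate, and it remains to analyze $T^2_{-\bfb}(\D)$ itself, organized by $\#\bfb$.

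For $\#\bfb=1$, Remark~\ref{rem:T2ei} gives $T^2_{-e_i}(\D)\simeq H^1\braket{N_{e_i}}$, and I would identify this with the first cohomology of the subgraph of $\D$ induced on the vertices outside $\nu(i)$: the points deleted from $\braket{N_{e_i}}$ are precisely the neighbors of $i$, which only dangle edges or form isolated open arcs and hence cannot close a $1$-cycle. This group is nonzero exactly when some cycle of $\D$ avoids $\nu(i)$, i.e.\ fails to dominate $i$; so $T^2_{-e_i}(\D)=0$ for all $i$ if and only if every cycle is dominating, which is condition~\ref{item:unobstr1ii}. For $\#\bfb=2$ with $\bfb\in\CC_\D$ a minimal nonface, Remark~\ref{rem:T2forCircuits} reduces the question to the connectivity of $\braket{\wt N_\bfb}$, and the key computation is that $\wt N_\bfb$ consists of the edges of $\D$ avoiding $\bfb$ together with the vertices outside $\widehat{\bfb}$; the goal is to prove that $\braket{\wt N_\bfb}$ is connected if and only if $\D\sdif\widehat{\bfb}$ is, which is condition~\ref{item:unobstr1iii}.

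This last identification is where I expect the real work to lie, and the same difficulty recurs in the two cases left for sufficiency: $\#\bfb=2$ with $\bfb$ an edge, and $\bfb\in\CC_\D$ with $\#\bfb\geqslant 3$ (all other $\bfb$ with $\#\bfb\geqslant 3$ are neither faces nor minimal nonfaces and die by Remark~\ref{rem:nonMinimalFace}). The subtlety is that $\braket{\wt N_\bfb}$ is a union of \emph{relatively open} simplices which is generally not closed: an edge of $\D$ incident to a common neighbor of the elements of $\bfb$ appears in $\wt N_\bfb$ even though that common neighbor is deleted as a vertex. Such an edge contributes either a harmless dangling half-open arc (one endpoint present), or a dangerous isolated open arc (both endpoints deleted) which adds a spurious connected component and could disconnect $\braket{\wt N_\bfb}$ even when $\D\sdif\widehat{\bfb}$ is connected.

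The heart of the proof is to exclude this using all three conditions jointly. An edge between two common neighbors $k,l$ of $\bfb=\{i,j\}$ forces, by \ref{item:unobstr1i}, that $k$ and $l$ have no further neighbors; the cycle $(i,k,l)$ together with \ref{item:unobstr1ii} then drives every remaining vertex into the closed neighborhood of this cycle, collapsing $\D\sdif\widehat{\bfb}$ so that no second genuine component can survive alongside the arc. I would carry out this bookkeeping to show that under \ref{item:unobstr1i}--\ref{item:unobstr1iii} every relevant $\braket{\wt N_\bfb}$ is connected (killing the circuit cases, including $\#\bfb\geqslant 3$), and that for $\bfb$ an edge the restriction maps flanking $T^2_{-\bfb}(\D)$ in the long exact sequence \eqref{eq:les} are surjective on $H^0$ and injective on $H^1$, forcing $T^2_{-\bfb}(\D)=0$. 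Combining this sufficiency analysis with the necessity direction (where each displayed condition is read off from the vanishing of one specific component) completes the characterization.
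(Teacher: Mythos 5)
Your proposal follows essentially the same route as the paper's own proof: condition \ref{item:unobstr1i} is read off from vertex links via Lemma~\ref{lem:dim0T20}, condition \ref{item:unobstr1ii} from the degrees $-e_i$ via Remark~\ref{rem:T2ei}, circuits are reduced to connectivity of $\braket{\wt{N}_\bfb}$ via Remark~\ref{rem:T2forCircuits}, facets $\bfb$ are handled by surjectivity/injectivity in \eqref{eq:les}, and your ``heart of the proof'' (an edge between two common neighbors of $\bfb$ forces, via the degree bound and domination, that $\wt{N}_\bfb$ collapses to a single arc) is precisely the paper's Claim that (i)+(ii)+(iii) is equivalent to (i)+(ii)+(iii$'$), including the unconditional observation that half-open arcs never merge components, which gives necessity of \ref{item:unobstr1iii}. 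The only spot where your sketch and the paper diverge in the remaining bookkeeping is the cardinality-three circuit case: there $\braket{\wt{N}_\bfb}$ is the honest realization of $\D\sdif\bfb$ (no open-arc subtlety), and the paper settles its connectivity by a separate case analysis using (i), (ii) and (iii).
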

\begin{remark}
  \label{rem:circulant}
  One may think that there is a connection between the three conditions of Theorem~\ref{thm:unobstructedDim1} and circulant graphs (cf. \cite{Hos07}).
  At least in the absence of leaves this seemed plausible.
  However, several leafless complexes in Figure~\ref{fig:unobstructed1dim} (e.g. the ones with 7 vertices and 9 edges)  are  not circulant  graphs.
\end{remark}
\begin{proof} Note that each of the first two conditions is equivalent to the vanishing of $T^2$ in some multidegree:  
  Because $\dim \D=1$, we get from Lemma~\ref{lem:dim0T20} that
  \begin{equation}
    \label{eq:(i)iff}
    \text{(i)} \iff T^2_{\bfa-\bfb}(\D)=0 \text{~for all~} \bfa\in\N^n \text{~with~} \card{\supp \bfa}=1  \text{~and for all~} \bfb\in\N^n.     
  \end{equation}
 By Remark~\ref{rem:T2ei}, $T^2_{-e_i}(\D)$ vanishes if and only if $\D\sdif \nu(i)$ does not contain any cycles. This is equivalent to every cycle must intersect $\nu(i)$. Requiring this for all $i$ one gets: 
 \begin{equation}
   \label{eq:(ii)iff}
   \text{(ii)}    \iff T^2_{-e_i}(\D)=0\text{~ for all~} i\in[n].
 \end{equation}
The vanishing of $T^2_{-\bfb}(\D)$ for all $\bfb\in\CC_\D$ is equivalent by Remark~\ref{rem:T2forCircuits} to the following condition: 
 \begin{enumerate}[label=(\roman*'),font=\upshape, itemsep=5pt, start=3]
\item  For every $\bfb\in\CC_\D$, the space $\braket{\wt{N}_\bfb}$ is connected.
\end{enumerate}
This condition is not equivalent to  condition (iii) from the statement of the theorem. A counterexample can be easily found using the following equality of sets:
\begin{equation}
  \label{eq:NtildeDminusbHat}
  \textstyle
  \wt{N}_\bfb \cup\Set{\emptyset} = (\D\sdif\widehat{\bfb}) \cup \Set{F\in\D\sodass F\cap \left(\bigcap_{v\in\bfb}\nu(v)\right)\neq\emptyset}.  
\end{equation}
However, we have the following equivalence when $\dim\D=1$.\\[1ex]
\textbf{Claim:} The conditions (i), (ii), and (iii) together are equivalent to (i), (ii), and (iii') together.\\[1ex]
\textit{Proof of Claim:} For the direct implication assume (i), (ii), and that $\D\sdif\widehat{\bfb}$ is connected for all $\bfb\in\CC_\D$ with $\card\bfb=2$.
Because $\dim\D=1$, a circuit $\bfb\in\CC_\D$  has cardinality at most three.\\[1ex]
If $\card\bfb =1$, then $\wt{N}_{\bfb}$ is empty, so connected.\\[1ex]
If $\card\bfb=2$, then label it by $\bfb=\Set{1,2}$ and  assume that $\braket{\wt{N}_\bfb}$ is not connected.
This means by~(\ref{eq:NtildeDminusbHat}) that there exists at least one edge $F\in\D$ with both ends in $\nu(1)\cap\nu(2)$. Let us call these ends $v$ and $w$; i.e. $F=\Set{v,w}$. Thus we have the following  edges in \D:
\[
  \Set{v,1},\quad\Set{v,2},\quad\Set{w,1},\quad\Set{w,2},\quad\Set{v,w}.
\]
By~\ref{item:unobstr1i} there are  no further edges incident to $v$ or $w$, and at most one edge incident to each $1$ and $2$.
Because we assumed that $\braket{\wt{N}_\bfb}$ is not connected, there must be at least one further vertex $u\in\D\sdif\Set{1,2,v,w}$.
By~\ref{item:unobstr1ii} the cycles $(1,v,w)$ and $(2,v,w)$ must be dominating.
Thus this vertex $u$ must be connected to both $1$ and $2$.
But this implies that $u\in\widehat{\bfb}$, and we get $\wt{N}_\bfb=\Set{\Set{v,w}}$, which gives us a connected space -- a contradiction.\\[1ex]
If $\card\bfb =3$, then label it $\bfb=\Set{1,2,3}$. Because $\dim \D=1$, we have
\[
  \wt{N}_\bfb = \left(\D\sdif\bfb\right)\sdif\Set{\emptyset}.
\] 
So $\braket{\wt{N}_\bfb}$ is the usual geometric realization of $\D\sdif\bfb$. Our goal is thus equivalent to proving that the simplicial complex $\D\sdif\bfb$ is connected.
We distinguish two cases: $\bfb\subsetneq\widehat{\bfb}$ and $\bfb=\widehat{\bfb}$. \\[1ex]
If $\bfb\subsetneq\widehat{\bfb}$, then by (i) we have precisely one extra vertex in $\widehat{\bfb}$, call it 4.
So $\D|_{\Set{1,2,3,4}}$ is the 1-skeleton of the 3-simplex.
Again, by (i), the vertices 1,2,3, and 4 cannot be connected to any other vertices of $\D$.
But (ii)  requires that every circuit is a dominating set, which means that $\D$ has only these four vertices, thus $\D\sdif\bfb=\Set{\emptyset,\Set{4}}$ is connected.\\[1ex]
If $\bfb=\widehat{\bfb}$, assume that $\D \sdif\bfb$ is not connected.
Since $\bfb$ is a minimal nonface of cardinality three, it is also a cycle, thus by (ii)  it must be a dominating set.
Since by assumption (i) the local degree of each vertex is at most three, there can be at most three more vertices in $\D$. 
So $\D\sdif\bfb$ is a disconnected complex on at most three vertices.
This means, that there is one vertex $v\in\D$ with local degree one.
Without loss of generality assume $\Set{v,1}$ to be  the only edge incident to the vertex $v$.
We may also assume  that the second vertex in $\D\sdif\bfb$ is called $w$ and that $\Set{2,w}\in\D$.
The set $\Set{1,w}\in\CC_\D$ then violates (iii), because the vertices $v$ and $3$ get disconnected after $\Set{1,w}$ and their only common neighbor, the vertex 2, have been removed.\\[2ex]
For the reverse implication of the Claim, we prove something stronger, namely that (iii') implies (iii).
To see this, notice that the second set on the right hand side of~(\ref{eq:NtildeDminusbHat}) consist only of edges which have at least one endpoint in $\bigcap_{v\in\bfb}\nu(v)$.
That is, these edges have at most one endpoint in $\D\sdif\widehat{\bfb}$.
This means, that adding these edges does not connect any components of $\D\sdif \widehat{\bfb}$. 
So
\[
  \dim_\K H^0\braket{\wt{N}_\bfb} \geqslant \dim_\K H^0(\D\sdif\widehat{\bfb}),
\]
and this shows the other implication of the Claim. \hfill$\blacksquare$\\[1ex]
We can now turn to proving the equivalence in the theorem.\\[1ex]
\fbox{$\then$} If $T^2(\D)=0$, then we have already seen, that it implies (i) and (ii). By Remark~\ref{rem:T2forCircuits} we also have (iii'). So the Claim that we proved implies (i), (ii), and (iii). \\[1ex]
\fbox{$\back$} The Claim implies that (i), (ii), and (iii') hold and we will use this to show that $T^2_{\bfa-\bfb}(\D)=0$ for all $\bfa$ and $\bfb$ for which this needs to be checked according to Lemma~\ref{lem:L2AC}. We have the following.\\[1ex]
\textbf{Case\,1:} $\card{\supp\,\bfa}=2$.
As $\dim \D=1$, we get $\link_\D \supp \bfa = \emptyset$ or  $\Set{\emptyset}$, so $T^2_{\bfa-\bfb}(\D)=0$ for all $\bfb$.\\[1ex]
\textbf{Case\,2:} $\card{\supp\,\bfa}=1$ is given by (i), see (\ref{eq:(i)iff}).\\[1ex]
\textbf{Case\,3:} $\card{\supp\,\bfa}=0$. We split this case in two.\\[1ex]
\textbf{Case\,3.1:} $\bfb\notin\D$. This case is covered by (iii'). \\[1ex]
\textbf{Case\,3.2:} $\bfb\in\D$.
Because $\dim\D=1$, there are only two  subcases of this case and we are done.\\[1ex]
\textbf{Case\,3.2.1:} $\card{\bfb}=1$. This case is covered by (ii).\\[1ex]
\textbf{Case\,3.2.1:} $\card{\bfb}=2$.
Label it $\bfb=\Set{1,2}$. Because $\bfb\in\D$ is a facet, we have
\[
  N_\bfb= (\D\sdif\bfb)\sdif \{\emptyset\}\quad\text{and}\quad
  \wt{N}_\bfb= \big((\D\sdif\bfb)\sdif \{\emptyset\}\big) \sdif \left(\nu(1)\cap\nu(2)\right).
\]
That means, that $\wt{N}_\bfb$ is obtained by removing some vertices from $N_\bfb$.
These vertices are precisely the common neighbors of $1$ and $2$, and thus have local degree at least two.  
From (i) we get that their local degree is at most three.
Thus, if $u\in N_\bfb \sdif \wt{N}_\bfb$, then $u$ is either isolated in $N_\bfb$, when its local degree is two, or a leaf vertex in $N_\bfb$, when its local degree is three.
In both cases the map $H^0\braket{ N_\bfb}\too H^0\braket{ \wt{N}_\bfb}$ in \eqref{eq:les} is surjective, so the map $H^0\braket{ \wt{N}_\bfb}\too T^2_{-\bfb}(\D)$ is the zero map.
This implies that $T^2_{-\bfb}(\D)$ is isomorphic to the kernel of the map $H^1\braket{ N_\bfb}\too H^1\braket{ \wt{N}_\bfb}$. As only leaf vertices or isolated vertices get removed from $\braket{N_\bfb}$, this map is an isomorphism and we conclude.
 \end{proof}

\subsection{Complete classification in dimension one}
\label{subsec:fullListDim1}
In the last part of this section we will classify all one-dimensional complexes with vanishing $T^2$.
We start by looking at \deff{trees}, that is graphs without any cycles, and, more generally, graphs with leaves.
A \deff{leaf} is a vertex of local degree one. 
\begin{remark}
  \label{rem:unobstructedWithLeafs}
  If \D\ is one-dimensional, $T^2(\D)=0$, and \D\ contains a leaf $v$, then \D\ has at most five vertices.
  Furthermore:
  \begin{enumerate}[label=(\alph*),font=\upshape, itemsep=5pt]
  \item If \D\ has five vertices, then \D\ is a square with a leaf attached.
  \item A tree has vanishing $T^2$ if and only if it has at most four vertices.
  \end{enumerate}  
\end{remark}
\begin{proof}
  Let $\Set{v,w}$ be the only edge of \D\ containing $v$.
  If $w$ is connected to all vertices of \D, then, as its local degree is at most three, \D\ has at most four vertices.
  So assume there exists $u\in\D$ such that $\bfb=\Set{w,u}\notin\D$.
  By Theorem~\ref{thm:unobstructedDim1}~\ref{item:unobstr1iii} we must have $\D\sdif\widehat{\bfb}$ connected.
  As $v$ is an isolated vertex of $\D\sdif\widehat{\bfb}$, we must have $\widehat{\bfb}=[n]\sdif\Set{v}$. 
  As the local degree of $w$ can be at most three and $\Set{w,v}\in\D$,
  $w$ can have at most two common neighbors with $u$.
  This means that $\card{\widehat{\bfb}}\leqslant 4$, which  implies that \D\ has at most five vertices.
  If $\card{\widehat{\bfb}}=4$, then $w$ and $u$ must be opposite corners of a square,
  so \D\ is  a square with a leaf.
  A direct check shows the statement about trees.
\end{proof}

In the remaining part, as the case when \D\ contains a leaf has been covered by \Cref{rem:unobstructedWithLeafs}, we will consider only leafless complexes with vanishing $T^2$.
Thus we assume from now on that every \D\  contains at least one cycle.
Without loss of generality, we assume that $\gamma=(1,2,\dots,c)$ is a chordless cycle in the one-dimensional simplicial complex \D\ on $[n]$, where  $3\leqslant c \leqslant n$.
Here, \deff{chordless} means that the only edges in $\D|_{[c]}$ are $\Set{1,2},\dots,\Set{c,1}$.
In other words, there are no diagonals (chords) present.
We will  need the following brief remark. 


\begin{remark}
  \label{rem:minimalCycle}
  If \D\ is a one-dimensional simplicial complex on $[n]$, containing a cycle of length $c$, and satisfying $T^2(\D)=0$,
  then \D\ has at most $2c$ vertices.
\end{remark}

In \Cref{lem:c7goodneighbors}, below, the hypothesis $c \geqslant 7$ is sharp, as demonstrated by the classification in Figure~\ref{fig:unobstructed1dim}.

\begin{lemma}
  \label{lem:c7goodneighbors}
  If $T^2(\D)=0$ and $c\geqslant 7$, then there exists no vertex $v\in \Set{c+1,\dots,n}$ with
  \[
    \Set{i,v}, \Set{v,j} \in\D \text{~~for some~~} 1\leqslant i<j\leqslant c.
  \]  
\end{lemma}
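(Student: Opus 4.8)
The plan is to argue by contradiction. Suppose some $v\in\{c+1,\dots,n\}$ is joined to two cycle vertices $i<j$. Since $T^2(\D)=0$, Theorem~\ref{thm:unobstructedDim1} gives us conditions (i), (ii) and (iii) to exploit. Condition (i) forces $v$, $i$ and $j$ all to have local degree at most three; as $i$ and $j$ already lie on the chordless cycle $\gamma$ and are joined to $v$, this pins down $\nu(i)=\{i-1,i,i+1,v\}$ and $\nu(j)=\{j-1,j,j+1,v\}$, and leaves $v$ with at most one further neighbour. The two edges $\{v,i\}$ and $\{v,j\}$ cut $\gamma$ into two arcs; I would write $d_1\leqslant d_2$ for their numbers of edges, so that $d_1+d_2=c$ and $d_2\geqslant\lceil c/2\rceil$, and let $C_1$ be the cycle formed by $v$ together with the shorter arc.

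The key step is a domination bound coming from condition (ii) applied to $C_1$. Consider a vertex $m$ of the longer arc that is not a neighbour of $i$ or $j$; there are exactly $d_2-3$ of these. Such an $m$ is not a vertex of $C_1$, and both of its $\gamma$-neighbours also lie on the longer arc, hence outside $C_1$. Any edge joining $m$ to a cycle vertex of $C_1$, that is, to a vertex of the shorter arc, would be a chord of $\gamma$, which is excluded. Therefore the only vertex of $C_1$ that $m$ can be adjacent to is $v$. Since $v$ has at most one neighbour besides $i$ and $j$, at most one such $m$ can be dominated by $C_1$, so condition (ii) forces $d_2-3\leqslant 1$, i.e. $d_2\leqslant 4$. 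Together with $d_2\geqslant\lceil c/2\rceil$ this already yields a contradiction whenever $c\geqslant 9$.

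It then remains to settle $c\in\{7,8\}$, where the bound leaves only $(d_1,d_2)=(3,4)$ and $(4,4)$. In each case there is a single far vertex $m$, which by the previous paragraph must be joined to $v$; this uses up $v$'s last edge and saturates both $v$ and $m$. The plan is then to produce a short cycle that cannot be dominating: for $c=7$ one may take $i=1$, $j=4$, $m=6$ (so $\{v,6\}\in\D$) and consider the $4$-cycle $(1,v,6,7)$, whose closed neighbourhood misses the vertex $3$; since every edge from $3$ into this $4$-cycle is either a chord of $\gamma$ or an edge at the now-saturated vertex $v$, the vertex $3$ cannot be dominated, contradicting (ii). The case $(4,4)$ is entirely analogous (e.g.\ $i=1$, $j=5$, $m=7$, the $4$-cycle $(1,v,7,8)$ missing $3$).

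I expect the main obstacle to be isolating the right domination statement in the second paragraph: the crucial point is that chordlessness of $\gamma$ prevents any long-arc vertex from being dominated by $C_1$ except through $v$, and this is exactly what collapses the whole problem to the inequality $d_2\leqslant 4$. The hypothesis $c\geqslant 7$ enters here, trimming the surviving configurations down to the two finite cases above, consistent with the sharpness remarked before the lemma; those two are then dispatched by the explicit short-cycle computation.
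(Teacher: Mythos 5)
Your proof is correct and takes essentially the same approach as the paper's: both arguments combine the degree bound (i) and the domination condition (ii) applied to the cycles that $v$ forms with the arcs of $\gamma$, use chordlessness to force remote arc vertices to attach to $v$ (which has only one spare edge), and then kill the surviving small cases with the same explicit non-dominating $4$-cycles (your $(1,v,6,7)$ is literally the paper's). The only organizational difference is that you run the domination count on one arc at a time, so your contradiction threshold is $c\geqslant 9$ and you must separately dispatch $c=8$ (which you do correctly), whereas the paper's simultaneous two-arc inclusion $\gamma_1\cup\gamma_2\subseteq\{1,2,j-1,j,j+1,c,v,u\}$ eliminates all $c\geqslant 8$ at once, leaving only the two mirror-image $c=7$ configurations.
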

\begin{proof}
  Assume there exists a vertex $v$ with the above property.
  By Remark~\ref{rem:minimalCycle} \D\ contains no triangles, so we may assume without loss of generality that $i=1$ and $2<j<c$.
  The existence of $v$ implies the existence of two more cycles in \D:
  \[
    \gamma_1 = (1,\dots,j,v) \quad\text{and}\quad\gamma_2 = (j,\dots,c,1,v).
  \]
  According to Theorem~\ref{thm:unobstructedDim1}, every cycle in \D\ must be a dominating set,
  and the local degree of any vertex is at most three.
  So, besides $1$ and $j$, there can be at most one more vertex $u\in\D$ with $\Set{v,u}\in\D$.
  Because $\gamma_1$ and $\gamma_2$ are dominating, we have  that
  \begin{eqnarray*}
    \gamma_2=\gamma_2\cap\nu(\gamma_1)  &\subseteq& \Set{c,1,j,j+1,v,u} \text{~~~and}\\
    \gamma_1=\gamma_1\cap\nu(\gamma_2)  &\subseteq& \Set{j-1,j,1,2,v,u}.
  \end{eqnarray*}
  This implies, that $\Set{1,\dots,c,v} = \gamma_1\cup\gamma_2\subseteq\Set{1,2,j-1,j,j+1,c,v,u}$.
  Because $c\geqslant 7$, we have only two possible combinations for $c$, $j$ and $u$ for which the above inclusion holds:
  \[
    (c,j,u)=(7,4,6) \text{~or~} (c,j,u)=(7,5,3).
  \]
  In the first case, the cycle $(1,v,6,7)$ is not dominating because $3\notin\nu(1,v,6,7)$. In the second case, the cycle $(1,v,3,2)$ is not dominating because $6\notin\nu(1,v,3,2)$. Thus, by Theorem~\ref{thm:unobstructedDim1}~\ref{item:unobstr1ii}, $T^2(\D) \neq 0$, a contradiction.
\end{proof}
\begin{proposition}
  \label{prop:unobstructedDim1}
  If $\D$ is a one-dimensional simplicial complex containing a chordless cycle of length $c\geqslant 7$, then $T^2(\D) \neq 0$.
\end{proposition}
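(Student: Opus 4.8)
The plan is to argue by contradiction: assume $T^2(\D)=0$ and produce a violation of one of the three conditions of Theorem~\ref{thm:unobstructedDim1}. First I would dispose of leaves. A chordless cycle of length $c\geqslant 7$ forces $\D$ to have at least seven vertices, so if $\D$ contained a leaf, then Remark~\ref{rem:unobstructedWithLeafs} (which caps the number of vertices at five once $T^2=0$ and a leaf are both present) would already contradict $T^2(\D)=0$. Hence I may assume $\D$ is leafless, so that every vertex has local degree at least two.

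Next I would pin down the local structure around the cycle $\gamma=(1,\dots,c)$. By condition~\ref{item:unobstr1i}, each cycle vertex already spends two of its at most three incident edges inside $\gamma$, so it has at most one neighbor outside $\gamma$. By Lemma~\ref{lem:c7goodneighbors} no outside vertex is adjacent to two cycle vertices, while condition~\ref{item:unobstr1ii} applied to $\gamma$ (which must itself be dominating) forces every outside vertex to be adjacent to at least one cycle vertex. Thus every outside vertex has exactly one neighbor on $\gamma$, and being leafless it must also have at least one neighbor among the remaining outside vertices.

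The heart of the argument, and the step I expect to be the main obstacle, is to rule out outside vertices altogether. Suppose $p$ is an outside vertex, attached to the cycle vertex $a$, and let $q$ be one of its outside neighbors, attached to the cycle vertex $b$; then $a\neq b$, since $a$ has only one outside neighbor. The path $a-p-q-b$ together with either of the two arcs of $\gamma$ between $a$ and $b$ yields cycles $\delta_1$ and $\delta_2$, both of which must be dominating by~\ref{item:unobstr1ii}. The key observation is that any interior vertex of an arc that is adjacent to neither $a$ nor $b$ has all its neighbors — its two cycle neighbors, and by Lemma~\ref{lem:c7goodneighbors} its single possible outside neighbor, which cannot be $p$ or $q$ — outside the vertex set of the cycle using the complementary arc, so that vertex fails to be dominated. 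Quantifying this, if the two arcs carry $\alpha$ and $\beta$ edges, then domination of $\delta_1$ and $\delta_2$ forces $\alpha\leqslant 3$ and $\beta\leqslant 3$, whence $c=\alpha+\beta\leqslant 6$, contradicting $c\geqslant 7$. Therefore $\D$ has no outside vertices, and being chordless and leafless it must equal the cycle $\gamma$.

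It then remains to exhibit the violation of condition~\ref{item:unobstr1iii} for the bare cycle. I would take $\bfb=\{1,4\}$: it is a minimal nonface by chordlessness, and since vertices three apart on a cycle of length at least $7$ share no common neighbor, we get $\widehat{\bfb}=\{1,4\}$. Deleting these two vertices splits $\gamma$ into the two disjoint nonempty arcs $\{2,3\}$ and $\{5,\dots,c\}$, so $\D\sdif\widehat{\bfb}$ is disconnected. This contradicts condition~\ref{item:unobstr1iii}, and hence $T^2(\D)\neq 0$.
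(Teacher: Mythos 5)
Your proof is correct, and its combinatorial core is the same as the paper's: both arguments run on Lemma~\ref{lem:c7goodneighbors} combined with the domination condition~\ref{item:unobstr1ii} of Theorem~\ref{thm:unobstructedDim1}, applied to the two cycles obtained by closing a bridge path $(a,p,q,b)$ --- two adjacent outside vertices hooked to two distinct cycle vertices --- with the two arcs of $\gamma$; your count ``each arc has at most three edges, hence $c\leqslant 6$'' is the same estimate as the paper's ``$\gamma_1\cup\gamma_2$ is contained in a set of eight vertices while it has $c+2\geqslant 9$ elements.'' The difference lies in how the bridge is produced and where the final contradiction lands. The paper leads with condition~\ref{item:unobstr1iii}: taking $\bfb=\Set{1,k}$ with $4\leqslant k\leqslant c-2$, Lemma~\ref{lem:c7goodneighbors} gives $\widehat{\bfb}=\bfb$, so connectivity of $\D\sdif\bfb$ forces a connecting path between the two arcs, which by Lemma~\ref{lem:c7goodneighbors} and domination of $\gamma$ must contain exactly two outside vertices; the domination count then ends the proof in one stroke. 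You instead generate the bridge from leaflessness, invoking Remark~\ref{rem:unobstructedWithLeafs} (which the paper's proof never uses): every outside vertex must have an outside neighbour, so the same count eliminates outside vertices altogether, after which you still need a separate violation of condition~\ref{item:unobstr1iii} on the bare $c$-cycle (your $\bfb=\Set{1,4}$). Your route is one step longer and needs one extra ingredient, but it isolates a clean intermediate fact --- under $T^2(\D)=0$, a chordless cycle of length at least seven would have to be all of $\D$ --- whereas the paper's argument is more economical, reaching the contradiction as soon as the bridge appears.
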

\begin{proof}
  Let us assume that $T^2(\D)=0$  and that \D\ contains the chordless cycle  $\gamma=(1,2,\dots, c)$.
  Because $c\geqslant 7$,  we can choose a vertex $k\in\Set{4,\dots,c-2}$ and define $\bfb=\Set{1,k}$.
  Because $\gamma$ is chordless, we have $\bfb\in \CC_\D$.
  By Lemma~\ref{lem:c7goodneighbors} and by the choice of $k$ we have $\bfb=\widehat{\bfb}$.
  So, according to Theorem~\ref{thm:unobstructedDim1}~(iii), we  have that
  \[
    \D\setminus\bfb\text{~~ is connected.}
  \]
  This means there must be a connection between the connected sets
  \[
    \Set{2,\dots,k-1} \text{~~and~~}\Set{k+1,\dots,c}.
  \]
  Again, because $\gamma$ is chordless, this connection must involve vertices in $[n] \sdif [c]$.
  According to Lemma~\ref{lem:c7goodneighbors}, there must be at least two such vertices on the connecting path.
  By Theorem~\ref{thm:unobstructedDim1}~(ii), the cycle $\gamma$ is dominating, meaning there is a connecting path with exactly two vertices outside of $[c]$.
  Let us denote these vertices as $v$ and $w$.
  Therefore, we must have $\Set{v,w} \in \D$ and
  \begin{eqnarray*}
    \exists\,i\in\Set{2,\dots,k-1}&\text{~with~}&\Set{i,v}\in\D,\\
    \exists\,j\in\Set{k+1,\dots,c}&\text{~with~}&\Set{w,j}\in\D.
  \end{eqnarray*}
  The chordless assumption, coupled with Lemma~\ref{lem:c7goodneighbors}, implies that there are no further edges in
  $\D|_{\Set{1,\dots,c,v,w}}$, beyond   those in $\gamma$ and the three edges $\Set{i,v}$, $\Set{v,w}$, and $\Set{w,j}$.   
  However,  two more cycles exist:
  \[
    \gamma_1 = (v,i,\dots,j,w) \quad\text{and}\quad\gamma_2 = (w,j,\dots,c,1,\dots,i,v),
  \]
  which, by Theorem~\ref{thm:unobstructedDim1}~\ref{item:unobstr1ii}, must also be dominating and thus satisfy:
  \begin{eqnarray*}
    \gamma_2=\gamma_2\cap\nu(\gamma_1)  &=& \Set{i-1,i,j,j+1,v,w} \text{~~~and}\\
    \gamma_1=\gamma_1\cap\nu(\gamma_2)  &=& \Set{i,i+1,j-1,j,v,w}.
  \end{eqnarray*}
  Then
  \[
    \Set{1,\dots,c,v,w} = \gamma_1\cup\gamma_2 = \Set{i-1,i,i+1,j-1,j,j+1,v,w},  
  \]          
 while, by the assumption $c \geq 7$, \[
 \#\Set{1,\dots,c,v,w} \geq 9>8 \geq \#\Set{i-1,i,i+1,j-1,j,j+1,v,w},
 \] a contradiction.
\end{proof}
From Theorem~\ref{thm:unobstructedDim1}~(ii), Remark~\ref{rem:unobstructedWithLeafs} and Proposition~\ref{prop:unobstructedDim1} we obtain that, if $\dim\D=1$ and $T^2(\D)=0$, then \D\ has at most 12 vertices. We can improve this bound.
\begin{proposition}
  \label{prop:no9ubobstructed}
 A  one-dimensional simplicial complex \D\ with $T^2(\D)=0$ has at most eight vertices. 
\end{proposition}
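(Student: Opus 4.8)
The plan is to sharpen the bound $n\leqslant 12$ recorded just above the statement by a finite structural analysis organized around the girth of \D. Throughout I assume $\dim\D=1$ and $T^2(\D)=0$. If \D\ has a leaf, then $n\leqslant 5$ by Remark~\ref{rem:unobstructedWithLeafs}, so I may assume \D\ is leafless; by Theorem~\ref{thm:unobstructedDim1}~\ref{item:unobstr1i} every vertex then has local degree $2$ or $3$. A leafless graph contains a cycle, so I fix a \emph{shortest} cycle $\gamma=(1,\dots,c)$, which is automatically chordless, and set $R=[n]\sdif\Set{1,\dots,c}$, so that $n=c+\card R$. By Proposition~\ref{prop:unobstructedDim1} we have $c\leqslant 6$, and by Remark~\ref{rem:minimalCycle} we have $\card R\leqslant c$; in particular if $c\leqslant 4$ then $n\leqslant 2c\leqslant 8$ and we are done. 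It thus remains to treat $c\in\Set{5,6}$ and to improve $\card R\leqslant c$ to $\card R\leqslant 8-c$.

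The first step is a structural lemma in the spirit of Lemma~\ref{lem:c7goodneighbors}. Because $\gamma$ is a shortest cycle, two vertices of $\gamma$ at $\gamma$-distance $d$ sharing a common neighbor $v\in R$ would create a cycle of length $d+2$; since $d\leqslant\lfloor c/2\rfloor$ forces $d+2<c$ when $c\in\Set{5,6}$, no vertex of $R$ is adjacent to two vertices of $\gamma$. As $\gamma$ is dominating by Theorem~\ref{thm:unobstructedDim1}~\ref{item:unobstr1ii}, every $v\in R$ is adjacent to \emph{exactly} one vertex of $\gamma$, and since each cycle vertex has at most one neighbor outside $\gamma$, this gives an injection $R\hookrightarrow\Set{1,\dots,c}$; write $v_i$ for the external neighbor of $i$ when it exists. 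Consequently each $v\in R$ has one edge to $\gamma$ and at most two edges inside $R$, so the graph induced on $R$ has maximum degree two and, because every $v\in R$ needs total degree at least two, no isolated vertices. The same shortest-cycle count shows $v_i$ and $v_j$ can be adjacent only when $\operatorname{dist}_\gamma(i,j)=c-3$, i.e.\ exactly $2$ if $c=5$ and exactly $3$ if $c=6$.

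I would then finish by a short case analysis, in each case exhibiting a cycle that is not dominating and invoking Theorem~\ref{thm:unobstructedDim1}~\ref{item:unobstr1ii}. For $c=6$ the adjacency constraint lets edges inside $R$ join only antipodal pairs $\Set{v_i,v_{i+3}}$; with no isolated vertices this pairs up all of $R$, so $\card R$ is even and $\card R\geqslant 3$ forces $\card R\in\Set{4,6}$. In either situation the cycle $(1,v_1,v_4,4,3,2)$ has a closed neighborhood omitting $v_5$ (and $v_6$), so it is not dominating; hence $\card R\leqslant 2$ and $n\leqslant 8$. For $c=5$ the graph on $R$ is an edge cover inside the ``pentagram'' $5$-cycle $v_1v_3v_5v_2v_4$, so if $\card R\geqslant 4$ it is, up to the symmetry of $\gamma$, one of finitely many graphs: $2P_2$ or $P_4$ when $\card R=4$, and $C_5$, $P_5$ or $P_2\sqcup P_3$ when $\card R=5$. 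In each I would produce a non-dominating cycle: for instance $(1,v_1,v_3,3,2)$ omits $v_4$ in the $2P_2$ and $P_5$ cases, while a cycle such as $(1,v_1,v_3,3,4,5)$ or $(1,v_1,v_4,4,5)$ omits a remaining external vertex in the others. This yields $\card R\leqslant 3$ and $n\leqslant 8$.

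The main obstacle is precisely this last paragraph: there is no single cycle that witnesses non-domination uniformly, so the argument must branch over the possible induced graphs on $R$ and, for each, select a cycle whose closed neighborhood provably misses an external vertex. The bookkeeping is delicate because domination must be tested against cycles of length both $c$ and $c+1$, and the extremal configurations—most notably the Petersen graph, arising for $c=5$ and $\card R=5$—are exactly the near-misses that the weaker estimate $n\leqslant 2c$ cannot rule out. Reducing these finitely many configurations via the rotational symmetry of $\gamma$ and the pentagram/antipodal adjacency patterns is where the real work lies; once a configuration is fixed, verifying that the chosen cycle fails to dominate is a routine neighborhood computation.
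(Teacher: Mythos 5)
Your proof is correct, and it reaches the bound by a genuinely different organization than the paper's, even though both arguments run on the same fuel: Proposition~\ref{prop:unobstructedDim1} (no chordless cycles of length at least $7$), Remark~\ref{rem:minimalCycle} ($n\leqslant 2c$), and Theorem~\ref{thm:unobstructedDim1}. The paper fixes $n\in\{9,\dots,12\}$, proves via a violation of condition~\ref{item:unobstr1iii} that a chordless pentagon must exist (which kills $n\in\{11,12\}$), and then for $n\in\{9,10\}$ uses domination to \emph{force} every admissible external edge, arriving at the two explicit candidates of Figure~\ref{fig:twoSurvivors}, both destroyed by the single non-dominating cycle $(2,7,9,6,8,3)$. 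You instead stratify by the girth $c$, reduce to $c\in\{5,6\}$, and prove the sharper local bound $\card{R}\leqslant 8-c$ using only conditions \ref{item:unobstr1i} and \ref{item:unobstr1ii}: your hexagon case replaces the paper's appeal to condition~\ref{item:unobstr1iii} by an explicit non-dominating $6$-cycle, and your pentagon case enumerates the possible edge covers inside the pentagram ($2P_2$ and $P_4$ for $\card{R}=4$; $C_5$, $P_5$, $P_2\sqcup P_3$ for $\card{R}=5$) instead of forcing all pentagram edges at once. The trade-off: the paper's forcing argument leaves only two terminal graphs to inspect, while your route is mechanically uniform (every contradiction is a cycle that fails to dominate) and correctly isolates the Petersen graph (your $C_5$ case) as the extremal near-miss, which is exactly the paper's $n=10$ survivor in disguise. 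I checked the case analysis you defer as routine, and it does close in every branch: $(1,v_1,v_4,4,3,2)$ misses $v_5$ or $v_6$ in the hexagon cases; $(1,v_1,v_3,3,2)$ misses $v_4$ in the $2P_2$, $P_5$, and $P_2\sqcup P_3$ cases once you rotate the labels so that the absent pentagram edge is $\{v_1,v_4\}$; and $(1,v_1,v_3,3,4,5)$ misses $v_2$ in the $P_4$ and $C_5$ cases. One small warning for the write-up: your alternative witness $(1,v_1,v_4,4,5)$ is actually dominating in several of these configurations (for instance in $P_4$ and in the Petersen graph, where $v_2$ and $v_3$ are adjacent to $v_4$ and $v_1$), so the burden of proof rests entirely on the other two cycles, and the normalization of labels must be stated before each witness is invoked.
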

\begin{proof}
  Let \D\ with $T^2(\D)=0$ be a one-dimensional simplicial complex on $[n]$.
  According to the previous comment, it suffices to prove that this leads to a contradiction when $n\in\Set{9,10,11,12}$.
  Remark~\ref{rem:unobstructedWithLeafs} and $n\geqslant 9$ imply that  \D\ does not have any leaves.
  By Theorem~\ref{thm:unobstructedDim1}~\ref{item:unobstr1ii} and Proposition~\ref{prop:unobstructedDim1},
  the chordless cycles in \D\ have to have length at least $\lceil \frac{n}{2}\rceil$.
  This means that \D\ has only chordless hexagons if $n\in\Set{11,12}$, and chordless pentagons or hexagons if $n\in\Set{9,10}$.  \\[1ex]
  \textbf{Claim.} \D\ must contain a pentagon.\\[1ex]
  \textit{Proof of Claim.} Assume \D\ contains no pentagon.
  Based on the above observations, this means that all the cycles in \D\ must be of length 6.
  We may assume $\gamma=(1,2,3,4,5,6)$ is a chordless cycle in \D. Because $n\geqslant 9$, we have $[n]\sdif[6]\neq\emptyset$. 
  As $\gamma$ is dominating, for every $v\in[n]\sdif[6]$ there exists $i_v\in[6]$ such that  $\Set{v,i_v}\in\D$.
  Because \D\ does not contain pentagons, $i_v$ must be uniquely determined:
  \[
    \nu(v)\cap [6] = \Set{i_v}.
  \]
  As \D\ cannot contain any leaves, there must be some vertex $w\in[n]\sdif[6]$ with $\Set{v,w}\in\D$.
  Because \D\ does not contain any pentagons, the shortest path from $i_v$ to $i_w$ in $\gamma$ must be of length at least three.
  But $\gamma$ is a hexagon, so there is only one such vertex. In other words, we showed that
  \[
    \forall\,v\in[n]\sdif[6],~~\exists!\,w\in[n]\sdif[6]\text{~with~}\Set{v,w}\in\D.
  \]
  This implies that condition \ref{item:unobstr1iii} from Theorem~\ref{thm:unobstructedDim1} fails for $\bfb=\Set{i_v,i_w}$, a contradiction.  \hfill$\blacksquare$\\[1ex]
The Claim  shows that  $n\in\Set{11,12}$ leads to a contradiction.
 So we can assume that $9\leqslant n\leqslant 10$ and that $\gamma=(1,2,3,4,5)$ is a chordless cycle in \D. Furthermore, as $\gamma$ dominates \D, we may assume  that
  \[
    \Set{1,6},\Set{2,7},\Set{3,8},\Set{4,9}\in\D \text{~and, if $n=10$,~}\Set{5,10}\in\D.    
  \]
  The only edges that avoid the formation of squares are
  \[
    \Set{6,8},\Set{6,9},\Set{7,9}\text{~~~and, if $n=10$,~}\Set{7,10},\Set{8,10}.
  \]
  Because every cycle must dominate \D,  all of these edges have to be edges of \D.
  For instance, if $\Set{6,8}\in\D$, then,  as the cycle $(1,2,3,8,6)$ dominates \D, we must also have
  \( \Set{6,9}\in\D. \)      
  We have thus excluded  all but the two complexes depicted in Figure~\ref{fig:twoSurvivors}.
 \begin{figure}[H]
    \centering
    \begin{tikzpicture}[scale=1]
      \tkzDefPoints{
        0.3/0/A,
        1.5/0/B,
        1.9/1.1/C,
        0.9/1.7/D,
        -0.1/1.1/E,       
        0.6/0.4/F,
        1.15/0.4/G,
        1.3/1/H,
        0.5/1/I};
      \tkzLabelPoint[left](E){\small 1}
      \tkzLabelPoint[below left](A){\small 2}
      \tkzLabelPoint[below right](B){\small 3}
      \tkzLabelPoint[right](C){\small 4}
      \tkzLabelPoint[above](D){\small 5}
      \tkzLabelPoint[left](F){\small 7}
      \tkzLabelPoint[right](G){\small 8}
      \tkzLabelPoint[above](H){\small 9}
      \tkzLabelPoint[above](I){\small 6}
    \tkzDrawLine[add = 0 and 0,red](A,B);
    \tkzDrawLine[add = 0 and 0](B,C);
    \tkzDrawLine[add = 0 and 0](C,D);
    \tkzDrawLine[add = 0 and 0](D,E);
    \tkzDrawLine[add = 0 and 0](E,A);
    \tkzDrawLine[add = 0 and 0,red](A,F);
    \tkzDrawLine[add = 0 and 0,red](B,G);        
    \tkzDrawLine[add = 0 and 0](C,H);
    \tkzDrawLine[add = 0 and 0](E,I);
    \tkzDrawLine[add = 0 and 0,red](F,H);
    \tkzDrawLine[add = 0 and 0,red](G,I);
    \tkzDrawLine[add = 0 and 0,red](H,I);

    \fill[color=black] (A) circle (1.5pt);
    \fill[color=black] (B) circle (1.5pt);
    \fill[color=black] (C) circle (1.5pt);
    \fill[color=red] (D) circle (1.5pt);
    \fill[color=black] (E) circle (1.5pt);
    \fill[color=black] (F) circle (1.5pt);
    \fill[color=black] (G) circle (1.5pt);
    \fill[color=black] (H) circle (1.5pt);
    \fill[color=black] (I) circle (1.5pt);
  \end{tikzpicture}
  \qquad\qquad
\begin{tikzpicture}[scale=1]
      \tkzDefPoints{
        0.3/0/A,
        1.5/0/B,
        1.9/1.1/C,
        0.9/1.7/D,
        -0.1/1.1/E,       
        0.5/0.4/F,
        1.25/0.4/G,
        1.4/1/H,
        0.4/1/I,
        0.9/1.3/J};
      \tkzLabelPoint[left](E){\small 1}
      \tkzLabelPoint[below left](A){\small 2}
      \tkzLabelPoint[below right](B){\small 3}
      \tkzLabelPoint[right](C){\small 4}
      \tkzLabelPoint[above](D){\small 5}
      \tkzLabelPoint[left](F){\small 7}
      \tkzLabelPoint[left, xshift =1pt, yshift=-3pt](G){\small 8}
      \tkzLabelPoint[below](H){\small 9}
      \tkzLabelPoint[above](I){\small 6}
      \tkzLabelPoint[right,xshift=-2pt](J){\small 10}
    \tkzDrawLine[add = 0 and 0,red](A,B);
    \tkzDrawLine[add = 0 and 0](B,C);
    \tkzDrawLine[add = 0 and 0](C,D);
    \tkzDrawLine[add = 0 and 0](D,E);
    \tkzDrawLine[add = 0 and 0](E,A);
    \tkzDrawLine[add = 0 and 0,red](A,F);
    \tkzDrawLine[add = 0 and 0,red](B,G);        
    \tkzDrawLine[add = 0 and 0](C,H);
    \tkzDrawLine[add = 0 and 0](E,I);
    \tkzDrawLine[add = 0 and 0](D,J);
    \tkzDrawLine[add = 0 and 0,red](F,H);
    \tkzDrawLine[add = 0 and 0,red](G,I);
    \tkzDrawLine[add = 0 and 0,red](H,I);
    \tkzDrawLine[add = 0 and 0](F,J);
    \tkzDrawLine[add = 0 and 0](G,J);

    \fill[color=black] (A) circle (1.5pt);
    \fill[color=black] (B) circle (1.5pt);
    \fill[color=black] (C) circle (1.5pt);
    \fill[color=red] (D) circle (1.5pt);
    \fill[color=black] (E) circle (1.5pt);
    \fill[color=black] (F) circle (1.5pt);
    \fill[color=black] (G) circle (1.5pt);
    \fill[color=black] (H) circle (1.5pt);
    \fill[color=black] (I) circle (1.5pt);
    \fill[color=black] (J) circle (1.5pt);
  \end{tikzpicture}  
  \caption[Surviving candidates]{The only two surviving candidates  on at least 9 vertices.}
  \label{fig:twoSurvivors}
  \end{figure}
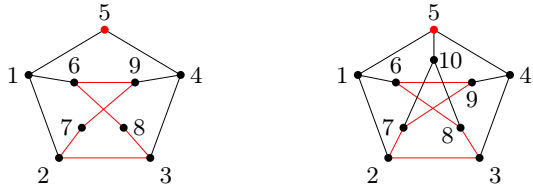
\noindent However, for both of these candidates, the cycle $(2,7,9,6,8,3)$ does not intersect the neighborhood of the vertex 5. Thus, by Theorem~\ref{thm:unobstructedDim1}, they are have nonvanishing $T^2$.
\end{proof}
Not many candidates are left for one-dimensional simplicial complexes with vanishing $T^2$.
According to Lemma~\ref{lem:dim0T20}, complete bipartite graphs $K_{r,s}$ satisfy $T^2(K_{r,s})=0$  precisely when  $1\leqslant r,s\leqslant 3$.
This follows because these complexes are joins of zero-dimensional complexes and because $T^i$ of a join is obtained via tensor product from the $T^i$s of the joined complexes \cite[Proposition\,2.3]{CI14} (see also~(\ref{eq:TofJoins}) below).
Thus, $T^2$ of the join of two complexes vanishes if and only if $T^2$ vanishes for both joined complexes. \\[1ex]
However, there are more  one-dimensional complexes with $T^2=0$ beyond these six. 

\begin{theorem}
    \label{thm:summarySection3}
    There are precisely 26 one dimensional simplicial complexes with $T^2=0$, all illustrated in Figure~\ref{fig:unobstructed1dim}. 
    In particular, the maximal possible number of vertices is eight, and,  if a leaf is present, then there are at most five vertices.    
\end{theorem}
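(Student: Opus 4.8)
The plan is to prove \Cref{thm:summarySection3} by assembling the structural results already established in this section into a finite search, then certifying that the resulting list has exactly 26 complexes. First I would record the two bounds that do the heavy lifting: by \Cref{prop:no9ubobstructed} any one-dimensional $\D$ with $T^2(\D)=0$ has at most eight vertices, and by \Cref{rem:unobstructedWithLeafs} if such a $\D$ contains a leaf then $n\leqslant 5$, with the five-vertex leafed case being uniquely a square with a pendant edge. This immediately settles the ``in particular'' clauses of the statement, so the only remaining work is to produce the exact count of 26 and to verify that each listed complex in \Cref{fig:unobstructed1dim} indeed has vanishing $T^2$ while nothing is missing.

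The main body of the argument is a case analysis organized by the number of vertices $n\in\{1,\dots,8\}$, using \Cref{thm:unobstructedDim1} as the decision criterion throughout: a one-dimensional $\D$ is unobstructed precisely when (i) every vertex has local degree at most three, (ii) every cycle is dominating, and (iii) for every two-element circuit $\bfb$ the deletion $\D\sdif\widehat{\bfb}$ is connected. For the leafed complexes I would dispatch $n\leqslant 5$ directly via \Cref{rem:unobstructedWithLeafs}, in particular using part (b) to handle all trees. For the leafless complexes with $6\leqslant n\leqslant 8$, I would invoke \Cref{prop:unobstructedDim1} to discard anything containing a chordless cycle of length $c\geqslant 7$, and then \Cref{rem:minimalCycle} together with condition (ii) to bound the admissible girth from below (every chordless cycle has length at least $\lceil n/2\rceil$), so that only a short list of girth/vertex-count combinations survives. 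Within each surviving combination, condition (i) caps the total number of edges, reducing the problem to finitely many graphs up to isomorphism, each of which is either eliminated by one of the three conditions or confirmed to appear in \Cref{fig:unobstructed1dim}.

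The hard part will be the completeness of the enumeration rather than any single verification: one must be certain that no admissible isomorphism type has been overlooked when passing from the coarse numerical constraints (degree $\leqslant 3$, girth $\geqslant \lceil n/2\rceil$, dominating cycles) to the concrete graphs. I expect this to be handled most cleanly by fixing a chordless cycle $\gamma$ of minimal length as a skeleton and then systematically attaching the remaining $n-c$ vertices, using \Cref{lem:c7goodneighbors} and condition (ii) to constrain where chords and external paths may go, in direct parallel to the casework already carried out in the proofs of \Cref{prop:unobstructedDim1} and \Cref{prop:no9ubobstructed}. Since the ranges are small, this is a finite but delicate bookkeeping task; a computer-assisted check that each of the $\binom{\ }{\ }$ candidate graphs on up to eight vertices with maximum degree three satisfies or violates (i)--(iii) would serve both as a guard against omissions and as independent confirmation that the figure contains exactly the 26 complexes claimed.
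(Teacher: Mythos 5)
Your proposal is correct and follows essentially the same route as the paper's own proof: certify the 26 listed complexes via the three conditions of Theorem~\ref{thm:unobstructedDim1} (or by a Macaulay2 computation), dispatch the leafed cases through Remark~\ref{rem:unobstructedWithLeafs}, and enumerate the leafless ones on $6\leqslant n\leqslant 8$ vertices by starting from a shortest chordless cycle and exploiting its domination property and the degree bound, in parallel with the casework of Proposition~\ref{prop:no9ubobstructed}. The paper compresses this enumeration into a brief sketch, so your more explicit bookkeeping (and the suggested computational cross-check) is a faithful expansion of the same argument rather than a different one.
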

\begin{proof}    
Besides verifying that the three conditions of Theorem~\ref{thm:unobstructedDim1} are fulfilled by all 26 complexes presented in Figure~\ref{fig:unobstructed1dim}, one may also quickly check this using the Macaulay2 package \texttt{VersalDeformations} \cite{M2,Ilt12} to confirm that $T^2=0$ for all these complexes.\\[1ex]
To establish that these are the only one-dimensional complexes with vanishing $T^2$, first note that the cases where \D\ contains a leaf are fully covered in Remark~\ref{rem:unobstructedWithLeafs}.
In the absence of leaves, one has to start from the shortest cycle contained in \D.
Using the minimality of this cycle and the fact that its vertices form a dominating set for \D, one obtains the list in Figure~\ref{fig:unobstructed1dim} following similar arguments as those used in the proof of  Proposition~\ref{prop:no9ubobstructed}.
\end{proof}
\begin{figure}[H]
  \centering

  \begin{tabular}{llllll}
    \begin{tikzpicture}[scale=.5]
    \tkzDefPoints{
      1/1/A,
      2/1/B};
    \fill[color=blue] (A) circle (3pt);
    \fill[color=blue] (B) circle (3pt);
    \tkzDrawLine[color = blue, add = 0 and 0](A,B);    
  \end{tikzpicture}
  &&
    
  \begin{tikzpicture}[scale=.5]
    \tkzDefPoints{
      1/1/A,
      1/2/B,
    2/1.5/C};
    \fill[color=black] (A) circle (3pt);
    \fill[color=black] (B) circle (3pt);
    \fill[color=black] (C) circle (3pt);
    \tkzDrawLine[add = 0 and 0](A,B);    
  \end{tikzpicture}
  &
  \begin{tikzpicture}[scale=.5]
    \tkzDefPoints{
      1/1/A,
      1/2/B,
    2/1.5/C};
    \fill[color=blue] (A) circle (3pt);
    \fill[color=blue] (B) circle (3pt);
    \fill[color=blue] (C) circle (3pt);
    \tkzDrawLine[color=blue, add = 0 and 0](A,B);
    \tkzDrawLine[color=blue, add = 0 and 0](A,C);    
  \end{tikzpicture}
  &
  \begin{tikzpicture}[scale=.5]
    \tkzDefPoints{
      1/1/A,
      1/2/B,
    2/1.5/C};
    \fill[color=blue] (A) circle (3pt);
    \fill[color=blue] (B) circle (3pt);
    \fill[color=blue] (C) circle (3pt);
    \tkzDrawLine[color=blue, add = 0 and 0](A,B);
    \tkzDrawLine[color=blue, add = 0 and 0](A,C);
    \tkzDrawLine[color=blue, add = 0 and 0](B,C);    
  \end{tikzpicture}\\[3ex]
    
  \begin{tikzpicture}[scale=.5]
    \tkzDefPoints{
      1/1/A,
      2/1/B,
      2/2/C,
      1/2/D};
    \fill[color=blue] (A) circle (3pt);
    \fill[color=blue] (B) circle (3pt);
    \fill[color=blue] (C) circle (3pt);
    \fill[color=blue] (D) circle (3pt);
    \tkzDrawLine[color=blue, add = 0 and 0](A,B);
    \tkzDrawLine[color=blue, add = 0 and 0](A,C);
    \tkzDrawLine[color=blue, add = 0 and 0](A,D);    
  \end{tikzpicture}
    &
  \begin{tikzpicture}[scale=.5]
    \tkzDefPoints{
      1/1/A,
      2/1/B,
      2/2/C,
      1/2/D};
    \fill[color=black] (A) circle (3pt);
    \fill[color=black] (B) circle (3pt);
    \fill[color=black] (C) circle (3pt);
    \fill[color=black] (D) circle (3pt);
    \tkzDrawLine[add = 0 and 0](A,B);
    \tkzDrawLine[add = 0 and 0](B,C);
    \tkzDrawLine[add = 0 and 0](A,D);    
  \end{tikzpicture}
  &
  \begin{tikzpicture}[scale=.5]
    \tkzDefPoints{
      1/1/A,
      2/1/B,
      2/2/C,
      1/2/D};
    \fill[color=blue] (A) circle (3pt);
    \fill[color=blue] (B) circle (3pt);
    \fill[color=blue] (C) circle (3pt);
    \fill[color=blue] (D) circle (3pt);
    \tkzDrawLine[color=blue, add = 0 and 0](A,B);
    \tkzDrawLine[color=blue, add = 0 and 0](B,C);
    \tkzDrawLine[color=blue, add = 0 and 0](C,D);
    \tkzDrawLine[color=blue, add = 0 and 0](D,A);    
  \end{tikzpicture}
  &
  \begin{tikzpicture}[scale=.5]
    \tkzDefPoints{
      1/1/A,
      2/1/B,
      2/2/C,
      1/2/D};
    \fill[color=black] (A) circle (3pt);
    \fill[color=black] (B) circle (3pt);
    \fill[color=black] (C) circle (3pt);
    \fill[color=black] (D) circle (3pt);
    \tkzDrawLine[add = 0 and 0](A,B);
    \tkzDrawLine[add = 0 and 0](B,D);
    \tkzDrawLine[add = 0 and 0](C,D);
    \tkzDrawLine[add = 0 and 0](D,A);    
  \end{tikzpicture}
    &
  \begin{tikzpicture}[scale=.5]
    \tkzDefPoints{
      1/1/A,
      2/1/B,
      2/2/C,
      1/2/D};
    \fill[color=blue] (A) circle (3pt);
    \fill[color=blue] (B) circle (3pt);
    \fill[color=blue] (C) circle (3pt);
    \fill[color=blue] (D) circle (3pt);
    \tkzDrawLine[color=blue, add = 0 and 0](A,B);
    \tkzDrawLine[color=blue, add = 0 and 0](B,C);
    \tkzDrawLine[color=blue, add = 0 and 0](C,D);
    \tkzDrawLine[color=blue, add = 0 and 0](D,A);
    \tkzDrawLine[color=blue, add = 0 and 0](D,B);    
  \end{tikzpicture}
  &
  \begin{tikzpicture}[scale=.5]
    \tkzDefPoints{
      1/1/A,
      2/1/B,
      2/2/C,
      1/2/D};
    \fill[color=blue] (A) circle (3pt);
    \fill[color=blue] (B) circle (3pt);
    \fill[color=blue] (C) circle (3pt);
    \fill[color=blue] (D) circle (3pt);
    \tkzDrawLine[color=blue, add = 0 and 0](A,B);
    \tkzDrawLine[color=blue, add = 0 and 0](B,C);
    \tkzDrawLine[color=blue, add = 0 and 0](C,D);
    \tkzDrawLine[color=blue, add = 0 and 0](D,A);
    \tkzDrawLine[color=blue, add = 0 and 0](D,B);
    \tkzDrawLine[color=blue, add = 0 and 0](A,C);    
  \end{tikzpicture}\\[3ex]
   
  \begin{tikzpicture}[scale=.5]
    \tkzDefPoints{
      1/1/A,
      2/1/B,
      2.3/1.8/C,
      1.5/2.4/D,
      0.7/1.8/E};
    \fill[color=black] (A) circle (3pt);
    \fill[color=black] (B) circle (3pt);
    \fill[color=black] (C) circle (3pt);
    \fill[color=black] (D) circle (3pt);
    \fill[color=black] (E) circle (3pt);
    \tkzDrawLine[add = 0 and 0](A,B);
    \tkzDrawLine[add = 0 and 0](B,C);
    \tkzDrawLine[add = 0 and 0](C,E);
    \tkzDrawLine[add = 0 and 0](E,A);
    \tkzDrawLine[add = 0 and 0](D,E);
  \end{tikzpicture}
  &
 \begin{tikzpicture}[scale=.5]
    \tkzDefPoints{
      1/1/A,
      2/1/B,
      2.3/1.8/C,
      1.5/2.4/D,
      0.7/1.8/E};
    \fill[color=black] (A) circle (3pt);
    \fill[color=black] (B) circle (3pt);
    \fill[color=black] (C) circle (3pt);
    \fill[color=black] (D) circle (3pt);
    \fill[color=black] (E) circle (3pt);
    \tkzDrawLine[add = 0 and 0](A,B);
    \tkzDrawLine[add = 0 and 0](B,C);
    \tkzDrawLine[add = 0 and 0](C,E);
    \tkzDrawLine[add = 0 and 0](E,A);
    \tkzDrawLine[add = 0 and 0](D,E);
    \tkzDrawLine[add = 0 and 0](D,C);
  \end{tikzpicture}
    &
  \begin{tikzpicture}[scale=.5]
    \tkzDefPoints{
      1/1/A,
      2/1/B,
      2.3/1.8/C,
      1.5/2.4/D,
      0.7/1.8/E};
    \fill[color=black] (A) circle (3pt);
    \fill[color=black] (B) circle (3pt);
    \fill[color=black] (C) circle (3pt);
    \fill[color=black] (D) circle (3pt);
    \fill[color=black] (E) circle (3pt);
    \tkzDrawLine[add = 0 and 0](A,B);
    \tkzDrawLine[add = 0 and 0](B,C);
    \tkzDrawLine[add = 0 and 0](C,E);
    \tkzDrawLine[add = 0 and 0](E,A);
    \tkzDrawLine[add = 0 and 0](D,E);
    \tkzDrawLine[add = 0 and 0](D,C);
    \tkzDrawLine[add = 0 and 0](A,D);
  \end{tikzpicture}
      &
 \begin{tikzpicture}[scale=.5]
    \tkzDefPoints{
      1/1/A,
      2/1/B,
      2.3/1.8/C,
      1.5/2.4/D,
      0.7/1.8/E};
    \fill[color=blue] (A) circle (3pt);
    \fill[color=blue] (B) circle (3pt);
    \fill[color=blue] (C) circle (3pt);
    \fill[color=blue] (D) circle (3pt);
    \fill[color=blue] (E) circle (3pt);
    \tkzDrawLine[color=blue, add = 0 and 0](A,B);
    \tkzDrawLine[color=blue, add = 0 and 0](B,C);
    \tkzDrawLine[color=blue, add = 0 and 0](C,E);
    \tkzDrawLine[color=blue, add = 0 and 0](E,A);
    \tkzDrawLine[color=blue, add = 0 and 0](D,A);
    \tkzDrawLine[color=blue, add = 0 and 0](C,D);
  \end{tikzpicture}
        &
 \begin{tikzpicture}[scale=.5]
    \tkzDefPoints{
      1/1/A,
      2/1/B,
      2.3/1.8/C,
      1.5/2.4/D,
      0.7/1.8/E};
    \fill[color=black] (A) circle (3pt);
    \fill[color=black] (B) circle (3pt);
    \fill[color=black] (C) circle (3pt);
    \fill[color=black] (D) circle (3pt);
    \fill[color=black] (E) circle (3pt);
    \tkzDrawLine[add = 0 and 0](A,B);
    \tkzDrawLine[add = 0 and 0](B,C);
    \tkzDrawLine[add = 0 and 0](C,D);
    \tkzDrawLine[add = 0 and 0](D,E);
    \tkzDrawLine[add = 0 and 0](E,A);
  \end{tikzpicture}  \\[3ex]

  \begin{tikzpicture}[scale=.5]
    \tkzDefPoints{
      1.1/1/A,
      1.9/1/B,
      2.3/1.7/C,
      1.9/2.4/D,
      1.1/2.4/E,
      0.7/1.7/F};
    \fill[color=black] (A) circle (3pt);
    \fill[color=black] (B) circle (3pt);
    \fill[color=black] (C) circle (3pt);
    \fill[color=black] (D) circle (3pt);
    \fill[color=black] (E) circle (3pt);
    \fill[color=black] (F) circle (3pt);
    \tkzDrawLine[add = 0 and 0](A,B);
    \tkzDrawLine[add = 0 and 0](B,C);
    \tkzDrawLine[add = 0 and 0](C,D);
    \tkzDrawLine[add = 0 and 0](D,E);
    \tkzDrawLine[add = 0 and 0](E,F);
    \tkzDrawLine[add = 0 and 0](F,A);
    \tkzDrawLine[add = 0 and 0](F,C);
    \tkzDrawLine[add = 0 and 0](A,E);
  \end{tikzpicture}
  &
  \begin{tikzpicture}[scale=.5]
    \tkzDefPoints{
      1.1/1/A,
      1.9/1/B,
      2.3/1.7/C,
      1.9/2.4/D,
      1.1/2.4/E,
      0.7/1.7/F};
    \fill[color=black] (A) circle (3pt);
    \fill[color=black] (B) circle (3pt);
    \fill[color=black] (C) circle (3pt);
    \fill[color=black] (D) circle (3pt);
    \fill[color=black] (E) circle (3pt);
    \fill[color=black] (F) circle (3pt);
    \tkzDrawLine[add = 0 and 0](A,B);
    \tkzDrawLine[add = 0 and 0](B,C);
    \tkzDrawLine[add = 0 and 0](C,D);
    \tkzDrawLine[add = 0 and 0](D,E);
    \tkzDrawLine[add = 0 and 0](E,F);
    \tkzDrawLine[add = 0 and 0](F,A);
    \tkzDrawLine[add = 0 and 0](F,C);
    \tkzDrawLine[add = 0 and 0](A,E);
    \tkzDrawLine[add = 0 and 0](B,D);
  \end{tikzpicture}
    &
  \begin{tikzpicture}[scale=.5]
    \tkzDefPoints{
      1.1/1/A,
      1.9/1/B,
      2.3/1.7/C,
      1.9/2.4/D,
      1.1/2.4/E,
      0.7/1.7/F};
    \fill[color=black] (A) circle (3pt);
    \fill[color=black] (B) circle (3pt);
    \fill[color=black] (C) circle (3pt);
    \fill[color=black] (D) circle (3pt);
    \fill[color=black] (E) circle (3pt);
    \fill[color=black] (F) circle (3pt);
    \tkzDrawLine[add = 0 and 0](A,B);
    \tkzDrawLine[add = 0 and 0](B,C);
    \tkzDrawLine[add = 0 and 0](D,E);
    \tkzDrawLine[add = 0 and 0](F,A);
    \tkzDrawLine[add = 0 and 0](F,C);
    \tkzDrawLine[add = 0 and 0](A,E);
    \tkzDrawLine[add = 0 and 0](B,D);
  \end{tikzpicture}
      &
  \begin{tikzpicture}[scale=.5]
    \tkzDefPoints{
      1.1/1/A,
      1.9/1/B,
      2.3/1.7/C,
      1.9/2.4/D,
      1.1/2.4/E,
      0.7/1.7/F};
    \fill[color=black] (A) circle (3pt);
    \fill[color=black] (B) circle (3pt);
    \fill[color=black] (C) circle (3pt);
    \fill[color=black] (D) circle (3pt);
    \fill[color=black] (E) circle (3pt);
    \fill[color=black] (F) circle (3pt);
    \tkzDrawLine[add = 0 and 0](A,B);
    \tkzDrawLine[add = 0 and 0](B,C);
    \tkzDrawLine[add = 0 and 0](D,E);
    \tkzDrawLine[add = 0 and 0](E,F);
    \tkzDrawLine[add = 0 and 0](F,C);
    \tkzDrawLine[add = 0 and 0](A,E);
    \tkzDrawLine[add = 0 and 0](B,D);
  \end{tikzpicture}
        &
   \begin{tikzpicture}[scale=.5]
    \tkzDefPoints{
      1.1/1/A,
      1.9/1/B,
      2.3/1.7/C,
      1.9/2.4/D,
      1.1/2.4/E,
      0.7/1.7/F};
    \fill[color=black] (A) circle (3pt);
    \fill[color=black] (B) circle (3pt);
    \fill[color=black] (C) circle (3pt);
    \fill[color=black] (D) circle (3pt);
    \fill[color=black] (E) circle (3pt);
    \fill[color=black] (F) circle (3pt);
    \tkzDrawLine[add = 0 and 0](A,B);
    \tkzDrawLine[add = 0 and 0](B,C);
    \tkzDrawLine[add = 0 and 0](D,E);
    \tkzDrawLine[add = 0 and 0](F,A);
    \tkzDrawLine[add = 0 and 0](F,D);
    \tkzDrawLine[add = 0 and 0](A,E);
    \tkzDrawLine[add = 0 and 0](B,D);
    \tkzDrawLine[add = 0 and 0](C,E);
  \end{tikzpicture}
          &
   \begin{tikzpicture}[scale=.5]
    \tkzDefPoints{
      1.1/1/A,
      1.9/1/B,
      2.3/1.7/C,
      1.9/2.4/D,
      1.1/2.4/E,
      0.7/1.7/F};
    \fill[color=blue] (A) circle (3pt);
    \fill[color=blue] (B) circle (3pt);
    \fill[color=blue] (C) circle (3pt);
    \fill[color=blue] (D) circle (3pt);
    \fill[color=blue] (E) circle (3pt);
    \fill[color=blue] (F) circle (3pt);
    \tkzDrawLine[color=blue, add = 0 and 0](A,B);
    \tkzDrawLine[color=blue, add = 0 and 0](B,C);
    \tkzDrawLine[color=blue, add = 0 and 0](D,E);
    \tkzDrawLine[color=blue, add = 0 and 0](F,A);
    \tkzDrawLine[color=blue, add = 0 and 0](F,C);
    \tkzDrawLine[color=blue, add = 0 and 0](F,D);
    \tkzDrawLine[color=blue, add = 0 and 0](A,E);
    \tkzDrawLine[color=blue, add = 0 and 0](B,D);
    \tkzDrawLine[color=blue, add = 0 and 0](C,E);
  \end{tikzpicture} \\[3ex]
  \begin{tikzpicture}[scale=.5]
    \tkzDefPoints{
      1.7/1/A,
      2.5/1.6/B,
      2.5/2.5/C,
      1.7/3.0/D,
      0.9/2.5/E,
      0.9/1.6/F,
      1.7/2.0/G};
    \fill[color=black] (A) circle (3pt);
    \fill[color=black] (B) circle (3pt);
    \fill[color=black] (C) circle (3pt);
    \fill[color=black] (D) circle (3pt);
    \fill[color=black] (E) circle (3pt);
    \fill[color=black] (F) circle (3pt);
    \fill[color=black] (G) circle (3pt);
    \tkzDrawLine[add = 0 and 0](A,B);
    \tkzDrawLine[add = 0 and 0](B,C);
    \tkzDrawLine[add = 0 and 0](C,D);
    \tkzDrawLine[add = 0 and 0](D,E);
    \tkzDrawLine[add = 0 and 0](E,F);
    \tkzDrawLine[add = 0 and 0](F,A);
    \tkzDrawLine[add = 0 and 0](A,G);
    \tkzDrawLine[add = 0 and 0](C,G);
    \tkzDrawLine[add = 0 and 0](E,G);
  \end{tikzpicture}
  &
  \begin{tikzpicture}[scale=.5]
    \tkzDefPoints{
      1.7/1/A,
      2.5/1.6/B,   
      2.5/2.5/C,   
      1.7/3.0/D,   
      0.9/2.5/E,   
      0.9/1.6/F,  
      1.7/2.0/G,   
      1.25/1.25/O};
    \fill[color=black] (A) circle (3pt);
    \fill[color=black] (B) circle (3pt);
    \fill[color=black] (C) circle (3pt);
    \fill[color=black] (D) circle (3pt);
    \fill[color=black] (E) circle (3pt);
    \fill[color=black] (F) circle (3pt);
    \fill[color=black] (G) circle (3pt);
    \tkzDrawLine[add = 0 and 0](A,B);
    \tkzDrawLine[add = 0 and 0](B,C);
    \tkzDrawLine[add = 0 and 0](C,D);
    \tkzDrawLine[add = 0 and 0](D,E);
    \tkzDrawLine[add = 0 and 0](E,F);
    \tkzDrawLine[add = 0 and 0](F,A);
    \tkzDrawLine[add = 0 and 0](A,G);
    \tkzDrawLine[add = 0 and 0](C,G);
    \tkzDrawArc[thick,black](O,B)(E);
  \end{tikzpicture}  
  &&
  \begin{tikzpicture}[scale=.5]
       \tkzDefPoints{
      0.4/0.4/A,
      1.4/0.4/B,
      1.4/1.2/C,
      0.4/1.2/D,
      0/0/E,
      1.8/0/F,
      1.8/1.6/G,
      0/1.6/H};
    \fill[color=black] (A) circle (3pt);
    \fill[color=black] (B) circle (3pt);
    \fill[color=black] (C) circle (3pt);
    \fill[color=black] (D) circle (3pt);
    \fill[color=black] (E) circle (3pt);
    \fill[color=black] (F) circle (3pt);
    \fill[color=black] (G) circle (3pt);
    \fill[color=black] (H) circle (3pt);
    \tkzDrawLine[add = 0 and 0](A,B);
    \tkzDrawLine[add = 0 and 0](B,C);
    \tkzDrawLine[add = 0 and 0](C,D);
    \tkzDrawLine[add = 0 and 0](D,A);
    \tkzDrawLine[add = 0 and 0](E,F);
    \tkzDrawLine[add = 0 and 0](F,G);
    \tkzDrawLine[add = 0 and 0](G,H);
    \tkzDrawLine[add = 0 and 0](H,E);
    \tkzDrawLine[add = 0 and 0](A,E);
    \tkzDrawLine[add = 0 and 0](B,F);
    \tkzDrawLine[add = 0 and 0](C,G);
    \tkzDrawLine[add = 0 and 0](D,H);    
  \end{tikzpicture}  
  &
    \begin{tikzpicture}[scale=.5]
    \tkzDefPoints{
      0.4/0.4/A,
      1.4/0.4/B,
      1.4/1.2/C,
      0.4/1.2/D,
      0/0/E,
      1.8/0/F,
      1.8/1.6/G,
      0/1.6/H};
    \fill[color=black] (A) circle (3pt);
    \fill[color=black] (B) circle (3pt);
    \fill[color=black] (C) circle (3pt);
    \fill[color=black] (D) circle (3pt);
    \fill[color=black] (E) circle (3pt);
    \fill[color=black] (F) circle (3pt);
    \fill[color=black] (G) circle (3pt);
    \fill[color=black] (H) circle (3pt);
    \tkzDrawLine[add = 0 and 0](A,B);
    \tkzDrawLine[add = 0 and 0](B,D);
    \tkzDrawLine[add = 0 and 0](D,C);
    \tkzDrawLine[add = 0 and 0](C,A);
    \tkzDrawLine[add = 0 and 0](E,F);
    \tkzDrawLine[add = 0 and 0](F,G);
    \tkzDrawLine[add = 0 and 0](G,H);
    \tkzDrawLine[add = 0 and 0](H,E);
    \tkzDrawLine[add = 0 and 0](A,E);
    \tkzDrawLine[add = 0 and 0](B,F);
    \tkzDrawLine[add = 0 and 0](C,G);
    \tkzDrawLine[add = 0 and 0](D,H);    
  \end{tikzpicture}
    &
  \begin{tikzpicture}[scale=.5]
    \tkzDefPoints{
      0.5/0.8/A,
      1.3/0.8/B,
      0.9/1.2/C,
      0.9/1.7/D,
      0.3/0/E,
      1.5/0/F,
      1.9/1.1/G,
      -0.1/1.1/H};
    \fill[color=black] (A) circle (3pt);
    \fill[color=black] (B) circle (3pt);
    \fill[color=black] (C) circle (3pt);
    \fill[color=black] (D) circle (3pt);
    \fill[color=black] (E) circle (3pt);
    \fill[color=black] (F) circle (3pt);
    \fill[color=black] (G) circle (3pt);
    \fill[color=black] (H) circle (3pt);
    \tkzDrawLine[add = 0 and 0](E,F);
    \tkzDrawLine[add = 0 and 0](F,G);
    \tkzDrawLine[add = 0 and 0](G,D);
    \tkzDrawLine[add = 0 and 0](D,H);
    \tkzDrawLine[add = 0 and 0](H,E);
    \tkzDrawLine[add = 0 and 0](D,C);
    \tkzDrawLine[add = 0 and 0](E,A);        
    \tkzDrawLine[add = 0 and 0](F,B);
    \tkzDrawLine[add = 0 and 0](A,C);
    \tkzDrawLine[add = 0 and 0](B,C);    
  \end{tikzpicture}  
\end{tabular} 
\caption[$T^2=0$ in dimension 1]{The list of all one-dimensional simplicial complexes with vanishing $T^2$. Those simplicial complexes that are moreover matroids appear in blue. Note that all these matroids are joins of uniform matroids, with one exception: the one in the second row and fifth column.}
\label{fig:unobstructed1dim}
\end{figure}

\section{$T^2$ for uniform matroids}
For every finite set $E$ and every $r\in\Z_{\geqslant 0}$, the \deff{uniform matroid} of rank $r$ on $E$ is the collection of all subsets of $E$ of cardinality at most $r$:
\[
  \CU_{E}^r = \Set{F\subseteq E \sodass \card{F} \leqslant r}.
\]
The dimensions of the multigraded components of first cotangent cohomology of $\CU_{[n]}^r$ were computed in \cite[Example~4.10]{BC2023}.
We compute now these dimensions for $T^2(\CU_{[n]}^r)$. By definition, for any face $A\in \CU_{[n]}^r$ we have
\begin{equation}
  \label{eq:linkUnr}
  \link_{\CU_{[n]}^r} A = \CU_{[n]\text{\raisebox{-.1ex}{$\sdif A$}}}^{r-\card{A}}.
\end{equation}
Thus, according to~(\ref{eq:TiOfLink}), it will be enough to determine $T^2(\CU_{[n]}^r)$  in purely negative multidegrees for all $n$ and $r$.
\begin{proposition}\label{prop:uniformT2}
For every $n\in\Z_{>0}$ and every $r\in\Z$ with $1\leqslant r\leqslant n$ we have
\[
  \dim_\K T^2_{-\bfb}(\CU_{[n]}^r) =
  \begin{cases}
    0&\textup{~if~~} \# \bfb \neq 2 \text{ or }r\geqslant n-1,\\
    r\binom{n-2}{r}-\binom{n-2}{r-1} & \textup{~if~~} \# \bfb = 2.
  \end{cases}
\]
In particular, $T^2_{-\bfb}(\CU_{[n]}^r)=0$  if and only if $r\geqslant n-2$; that is if the matroid is a full simplex, the boundary of a full simplex, or the uniform matroid $\CU_{[n]}^{n-2}$.
\end{proposition}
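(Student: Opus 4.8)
The plan is to compute $T^2_{-\bfb}(\CU_{[n]}^r)$ case by case according to $\#\bfb$ and to whether $\bfb$ is a face, a circuit, or a non-minimal nonface, feeding the combinatorics of $\CU_{[n]}^r$ (whose circuits are exactly the $(r+1)$-subsets) into the three remarks of Section~\ref{sec:prelim} and the long exact sequence~(\ref{eq:les}). The cases $\#\bfb\neq 2$ are the easy ones. For $\bfb=e_i$ one checks $N_{e_i}=\Set{F\subseteq[n]\setminus\Set{i}\sodass \#F=r}$, so $\braket{N_{e_i}}$ is a disjoint union of contractible open simplices and $H^1\braket{N_{e_i}}=0$, whence $T^2_{-e_i}=0$ by Remark~\ref{rem:T2ei}. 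If $\#\bfb\geqslant r+2$, then $\bfb$ is a nonface that is not a circuit, so Remark~\ref{rem:nonMinimalFace} gives $0$. If $\bfb$ is a circuit with $\#\bfb=r+1\geqslant 3$ (so $r\geqslant 2$), then by Remark~\ref{rem:T2forCircuits} we must compute $\wt H^0\braket{\wt N_\bfb}$, and $\braket{\wt N_\bfb}$ is the geometric realization of $\CU_{[n]\setminus\bfb}^r$, whose $1$-skeleton is complete; it is therefore connected and $T^2_{-\bfb}=0$.

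The heart of the argument is the face case $\#\bfb=k\leqslant r$. Writing $M=[n]\setminus\bfb$, $m=\#M=n-k$, and $s=r-k+1\geqslant 1$, a direct check from Definition~\ref{def:NandNtilde} gives
\[
  N_\bfb=\Set{F\subseteq M\sodass s\leqslant \#F\leqslant r}\quad\text{and}\quad \wt N_\bfb=\Set{F\subseteq M\sodass s+1\leqslant \#F\leqslant r}.
\]
Thus $\braket{N_\bfb}$ is obtained from the $(r-1)$-skeleton of the simplex on $M$ by deleting the $(s-2)$-skeleton, and $\braket{\wt N_\bfb}$ by deleting the $(s-1)$-skeleton. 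I would then use the standard fact that the complement of (the realization of) a subcomplex $A$ in (the realization of) a complex $X$ deformation retracts onto the order complex $\Delta(Q)$ of the poset $Q$ of faces of $X$ not lying in $A$. This identifies $\braket{N_\bfb}$ and $\braket{\wt N_\bfb}$, up to homotopy, with the order complexes of the rank-selected subposets of the Boolean lattice on $M$ supported on the consecutive ranks $[s,r]$ and $[s+1,r]$. The needed input is that such rank-selected order complexes are Cohen--Macaulay (Baclawski--Stanley), so their reduced homology over $\K$ is concentrated in the top degree, which equals the number of selected ranks minus one; the degenerate situations (where the top element $M$ is selected and the complex is a contractible cone, or where $\wt N_\bfb$ becomes empty or a single cell, both occurring only for $r$ close to $n$) are handled directly.

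I would then plug these homotopy types into~(\ref{eq:les}), whose tail reads
\[
  H^0\braket{N_\bfb}\xrightarrow{\ \alpha\ }H^0\braket{\wt N_\bfb}\too T^2_{-\bfb}(\CU_{[n]}^r)\too H^1\braket{N_\bfb}\too H^1\braket{\wt N_\bfb}.
\]
For $k\geqslant 3$ both posets carry at least two consecutive ranks, so both spaces are connected (making $\alpha$ an isomorphism $\K\to\K$, hence $\operatorname{coker}\alpha=0$) and $H^1\braket{N_\bfb}=0$ (its top homological degree is $k-1\geqslant 2$); exactness then forces $T^2_{-\bfb}=0$. For $k=2$ the poset on ranks $[r-1,r]$ has height one, so its order complex is the one-dimensional \emph{incidence graph} $\mathcal G$ between the $(r-1)$- and the $r$-subsets of $M$, with one edge per containment. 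This graph is connected for $2\leqslant r\leqslant m$, so $\dim_\K H^0\braket{N_\bfb}=1$ and $\alpha\colon \K\to\K^{\binom{m}{r}}$ is the diagonal, giving $\dim_\K\operatorname{coker}\alpha=\binom{m}{r}-1$, while $H^1\braket{\wt N_\bfb}=0$. Computing $\dim_\K H^1\braket{N_\bfb}$ from the Euler characteristic of $\mathcal G$ using $r\binom{m}{r}$ edges and $\binom{m}{r-1}+\binom{m}{r}$ vertices, and substituting $m=n-2$, yields $\dim_\K T^2_{-\bfb}=r\binom{n-2}{r}-\binom{n-2}{r-1}$; the identity $r\binom{n-2}{r}=(n-1-r)\binom{n-2}{r-1}$ rewrites this as $(n-r-2)\binom{n-2}{r-1}$, clarifying both its nonnegativity and its vanishing locus. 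The circuit case $r=1$, $\#\bfb=2$ falls under the first paragraph and returns $n-3$, consistent with the same formula.

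Finally, the ``in particular'' statement follows by reading off that $(n-r-2)\binom{n-2}{r-1}$ vanishes exactly when $r\geqslant n-2$. To promote this to $T^2(\CU_{[n]}^r)=0$ I invoke~(\ref{eq:TiOfLink}) and~(\ref{eq:linkUnr}): every multigraded component is a purely non-positive component of a link of $\CU_{[n]}^r$, and by~(\ref{eq:linkUnr}) each such link is again uniform of the same corank $n-r$, so corank at most two forces all components to vanish. The main obstacle is precisely the face case, namely identifying the two spaces with rank-selected order complexes and then extracting the exact $H^1$ for $k=2$ through the incidence graph while establishing clean vanishing for $k\geqslant 3$ via Cohen--Macaulayness; the bookkeeping of the degenerate sub-cases (when a top element is selected or $\wt N_\bfb$ is empty) also requires some care.
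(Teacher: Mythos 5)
Your proposal is correct and follows essentially the same route as the paper's proof: the same identification of $N_\bfb$ and $\wt{N}_\bfb$ with rank-selected subposets of the Boolean lattice on $[n]\sdif\bfb$, the same top-concentration of (co)homology (Bj\"orner's rank-selection/shellability theorem in the paper, the equivalent Cohen--Macaulayness statement in your write-up), and the same Euler-characteristic count of the bipartite incidence graph of $(r-1)$- and $r$-subsets in the decisive case $\#\bfb=2$. The only notable difference is minor: you extract $\dim_\K T^2_{-\bfb}$ directly from the cokernel of the diagonal map $H^0\braket{N_\bfb}\to H^0\braket{\wt{N}_\bfb}$ together with the kernel of $H^1\braket{N_\bfb}\to H^1\braket{\wt{N}_\bfb}$, which makes the step self-contained, whereas the paper instead runs an alternating-sum argument in \eqref{eq:les} using $T^1_{-\bfb}(\CU)=0$ cited from \cite{BC2023}.
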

\begin{proof}
  To simplify notation, we  write $\CU$ for $\CU_{[n]}^r$ throughout this proof.
  By Definition~\ref{def:NandNtilde} we have
  \begin{eqnarray*}
    N_\bfb(\CU) &=& \Set{F\subseteq [n]\sdif\bfb \sodass r-\card{\bfb}<\card F \leqslant r}\quad\text{and}\\
    \wt{N}_\bfb(\CU) &=& \Set{F\subseteq [n]\sdif\bfb \sodass r-\card{\bfb}+1<\card F \leqslant r}.
  \end{eqnarray*}
  By the symmetry of $\CU$, we may assume that $\bfb=\Set{n-b+1,\dots,n}$, so $[n]\sdif\bfb=[n-b]$ and $\card{\bfb}=b$. This implies 
  \[
    N_\bfb(\CU) = \CU_{[n-b]}^{r} \sdif  \CU_{[n-b]}^{r-b}
    \qquad \text{and} \qquad
    \wt{N}_\bfb(\CU) = \CU_{[n-b]}^{r} \sdif  \CU_{[n-b]}^{r-b+1}.
  \]
  As sets partially ordered by inclusion, $N_\bfb(\CU)$ and $\wt{N}_\bfb(\CU)$ are  rank-selected subposets of the Boolean poset of all subsets of $[n-b]$.
  Since Boolean posets are shellable, and by \cite[Theorem 4.1]{Bjo} every rank selected subposet of a shellable poset is again shellable, we have that both $N_\bfb(\CU)$ and $\wt{N}_\bfb(\CU)$ are shellable posets.
  In particular, their order complexes, when non-empty, are homotopy equivalent to a wedge of spheres of dimension one smaller than their rank: $b$ and $b-1$, respectively. 
Whenever $\emptyset\notin N_\bfb(\CU)$, the order complex $N_\bfb(\CU)$ is a barycentric subdivision of $\braket{N_\bfb(\CU)}$. Hence, $\braket{N_\bfb(\CU)}$  is homotopy equivalent to wedge of $(b-1)$-spheres and, if $b>1$ holds, $\braket{\wt{N}_\bfb(\CU)}$ is homotopy equivalent to a wedge of $(b-2)$-spheres.
We recall for convenience  (\ref{eq:les}):
\begin{equation}
    \tag{\ref{eq:les}}
      \begin{tikzcd}[column sep = small, row sep = 3em]
        0\ar[r]&T^1_{-\bfb}(\D)\ar[r]&H^0\braket{N_{\bfb}}\ar[r]&H^0\braket{\wt{N}_{\bfb}}\ar[r]&T^2_{-\bfb}(\D)\ar[r]&H^1\braket{N_{\bfb}}\ar[r]&H^1\braket{\wt{N}_{\bfb}}
\end{tikzcd}    
\end{equation}
From this long exact sequence and the above observations, we  deduce the following:\\[1ex]
If {$b=1$}, then 
\(  H^1\braket{N_\bfb(\CU)}=0
   \text{ and }
  H^0\braket{\wt{N}_\bfb(\CU)}=0,
\)
thus  $T^2_{-\bfb}(\CU)=0$.\\[1ex]
If {$b>2$}, then 
\(
 H^1\braket{N_\bfb(\CU)}=0 
 \text{ and }
 H^0\braket{N_\bfb(\CU)}\cong H^0\braket{\wt{N}_\bfb(\CU)}\cong \mathbb{K},  
\)
so, because the map from  $H^0\braket{N_\bfb(\CU)}$ to $H^0\braket{\wt{N}_\bfb(\CU)}$ has maximal rank, we also get $T^2_{-\bfb}(\CU)=0$.\\[1ex]
If $\#\bfb=2$ we have that
the order complex of  $\wt{N}_\bfb(\CU)$ is the 0-dimensional complex whose vertices are the $r$-subsets of $[n]\sdif\bfb$. This means
\begin{eqnarray*}
  \dim_\K H^0\braket{\wt{N}_\bfb(\CU)}&=&{ \textstyle \binom{n-2}{r}} ,\\
  \dim_\K H^1\braket{\wt{N}_\bfb(\CU)}&=&0.
\end{eqnarray*}
   The order complex of $N_\bfb(\CU)$ is the connected 1-dimensional simplicial complex whose vertices are all $r$- and $(r-1)$-subsets of $[n]\sdif\bfb$, and whose edges are pairs of such subsets, one strictly contained in the other. From an elementary  count we obtain
   \begin{eqnarray*}
      \dim_\K H^0\braket{N_\bfb(\CU)}&=&1,\\
      \dim_\K H^1\braket{N_\bfb(\CU)}&=&\textstyle r\binom{n-2}{r}-\left(\binom{n-2}{r}+\binom{n-2}{r-1}\right)+1.     
   \end{eqnarray*}
From \cite[Example 4.11]{BC2023} we get that $T^1_{-\bfb}(\CU)=0$.
The alternating sum of dimensions in \eqref{eq:les} being equal to zero, we get
\begin{eqnarray*}
  \dim_\K T^2_{-\bfb}(\CU) &=&  \dim_\K H^1\braket{     N_\bfb(\CU)} +
                               \dim_\K H^0\braket{\wt{N}_\bfb(\CU)} -
                               \dim_\K H^0\braket{     N_\bfb(\CU)} \\
                           &=&\textstyle r\binom{n-2}{r}- \binom{n-2}{r-1}.
\end{eqnarray*}
The binomial coefficient $\binom{k}{i}$ is defined as the number of $i$-subsets of a set with $k$ elements.
In particular, it vanishes if $i>k$. Notice also that our proof works for any $r$.
\end{proof}
\begin{corollary}
  \label{cor:unobstructedUnr}
  The uniform matroid $\CU^r_{[n]}$ has vanishing $T^2$ if and only if $n-r\leqslant 2$.
\end{corollary}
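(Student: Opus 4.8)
The plan is to bootstrap entirely from Proposition~\ref{prop:uniformT2}, combined with the reduction to links in~(\ref{eq:TiOfLink}) and the fact~(\ref{eq:linkUnr}) that every link of a uniform matroid is again uniform. The starting observation is that $T^2(\CU^r_{[n]})=0$ is equivalent to the vanishing of every multigraded component $T^2_{\bfa-\bfb}(\CU^r_{[n]})$, and that by Lemma~\ref{lem:L2AC} it suffices to consider the degrees with $A:=\supp\bfa\in\CU^r_{[n]}$ and $0\neq\bfb\in\{0,1\}^n$. Throughout I will write the corank of $\CU^r_{[n]}$ as $n-r$, so that the desired condition $n-r\leqslant 2$ is exactly ``corank at most two''.

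For the forward implication I would argue by specialization: if $T^2(\CU^r_{[n]})=0$, then in particular all purely non-positively graded pieces $T^2_{-\bfb}(\CU^r_{[n]})$ vanish. By the explicit formula in Proposition~\ref{prop:uniformT2}, taking any $\bfb$ with $\#\bfb=2$ gives a component of dimension $r\binom{n-2}{r}-\binom{n-2}{r-1}$, which (as recorded in the ``in particular'' clause of that proposition) is nonzero precisely when $r<n-2$. Hence the vanishing of $T^2$ forces $r\geqslant n-2$, i.e.\ $n-r\leqslant 2$.

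For the reverse implication I would fix an admissible degree $\bfa-\bfb$ as above and use~(\ref{eq:TiOfLink}) to write $T^2_{\bfa-\bfb}(\CU^r_{[n]})=T^2_{-\bfb}(\lk_{\CU^r_{[n]}}A)$. By~(\ref{eq:linkUnr}) this link is the uniform matroid $\CU^{\,r-\card A}_{[n]\sdif A}$ on $n-\card A$ elements of rank $r-\card A$. The crucial point is that its corank is $(n-\card A)-(r-\card A)=n-r\leqslant 2$, so corank is invariant under passing to links; this is exactly what lets the single inequality $n-r\leqslant 2$ control all multidegrees simultaneously. Applying the ``in particular'' clause of Proposition~\ref{prop:uniformT2} to this link then yields $T^2_{-\bfb}(\CU^{\,r-\card A}_{[n]\sdif A})=0$. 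Since this holds for every relevant degree, every graded component of $T^2(\CU^r_{[n]})$ vanishes, and therefore $T^2(\CU^r_{[n]})=0$.

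There is no genuine analytic difficulty here, since all the cohomology computations are already carried out in Proposition~\ref{prop:uniformT2}; the only step requiring care is the combinatorial bookkeeping that corank is preserved under taking links of uniform matroids. The one thing I would double-check is the degenerate boundary case $\card A=r$, where the link is $\CU^0_{[n]\sdif A}=\{\emptyset\}$: this complex has no vertices, so by Lemma~\ref{lem:L2AC} the support constraint $\supp\bfb\subseteq[\lk_{\CU^r_{[n]}}A]=\emptyset$ cannot be met by any nonzero $\bfb$, and such degrees contribute nothing. With this case handled, the equivalence $T^2(\CU^r_{[n]})=0\iff n-r\leqslant 2$ follows.
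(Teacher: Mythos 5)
Your proof is correct and takes essentially the same approach as the paper: both arguments combine Proposition~\ref{prop:uniformT2} with the observations that (\ref{eq:TiOfLink}) reduces everything to purely negative degrees of links and that, by (\ref{eq:linkUnr}), links of uniform matroids are uniform of the same corank $n-r$. Your write-up merely makes explicit what the paper leaves terse (the forward direction via the $\#\bfb=2$ components and the degenerate case $\card{A}=r$), but no new idea is involved.
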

\begin{proof}
  From Proposition~\ref{prop:uniformT2} we have that   $T^2_{-\bfb}(\CU_{[n]}^r)=0$ for all $\bfb$ if and only if $n-r\leqslant 2$.
  To conclude,  note that by (\ref{eq:linkUnr}) the difference between the number of elements and the rank stays constant when taking  links.
\end{proof}
\section{Matroids with vanishing $T^2$}
Before we state the main result of this section, we recall a few more notions from matroid theory.
The dual of a matroid $\M$ on $[n]$ is denoted by $\M^\ast$ and is determined by its set of bases:
\[
  \CB_{\M^\ast} = \{ [n]\sdif B \sodass B\in \CB_\M\}.
\]
A proof that $\M$ is a matroid if and only if $\M^\ast$ is a matroid can be found in \cite[Theorem~2.1.1]{Oxl11}.
In particular, if $\rank \M= r$, then  $\rank \M^\ast = n-r$. The later is called the \deff{corank} of $\M$.\\[1ex]
Two elements $v$ and $w$ of a matroid $ \M$ are called parallel if $\{v,w\}\notin\M$.
It is easy to see that parallel elements which are not loops have the same link in \M.
This implies that being parallel defines an equivalence relation on the set of elements of \M\ which are not loops.
A \deff{parallel class} of \M\ is such an equivalence class: a subset $P\subseteq [n]$ containing no loops, such that if $v,w\in P$ with $v\neq w$, then $v\parallel w$. 
The structure of rank two matroids is completely determined by the partition of $[n]\sdif \{\text{loops}\}$ into parallel classes \cite[Proposition~2.2]{CV15}.
Duality implies that isomorphism classes of coloop-free, corank two matroids are in bijection with partitions of $[n]$.\\[1ex]
Recall that a hyperplane of a rank $r$ matroid \M\ is an inclusion-maximal subset $H\subseteq [n]$ such that $\rank\M|_H=r-1$.
In other words, no basis is contained in $H$ and $H$ is maximal with respect to this property.
By \cite[Proposition~2.1.6]{Oxl11} $C$ is a circuit of \M\ precisely when $[n]\sdif C$ is a hyperplane of $\M^\ast$.
This is particularly useful for corank two matroids without coloops because the hyperplanes of $\M^\ast$ are precisely the parallel classes in $\M^\ast$.
So, if $\M$ is a corank two matroid and if $P_1,\dots,P_m$ are the parallel classes  of $\M^\ast$,
then the circuits of $\M$ are the loops and unions of the form
\begin{equation}
    \label{eq:circuitsCorank2}
    P_1\cup \dots\cup\widehat{P_k}\cup\dots P_m.
  \end{equation}
with $k\in\{1,\dots, m\}$.\\[1ex]
Finally, we recall the notion of  \deff{cycle-atomic} subset of $[n]$ that was introduced in [Section~3,BC2023] in order to describe $T^1$ for matroids. What was essentially proved in [Theorem~4.7, BC2023] is that if $T^1_{\bfa-\bfb}(\M)\neq 0$, then $\bfb$ is cycle-atomic. Here we will use the implications of \bfb\ being cycle-atomic to the structure of $N_\bfb$. 
 Here is the definition:
\[
\text{We call }  \bfb \subseteq [n] \text{~cycle-atomic if~}  \bfb\cap C \in \{\emptyset,\bfb\} \text{~for all~} C\in  \CC_\M.
\]
In the next proof we will use some technical results from \cite{BC2023} on $N_\bfb$ and $\wt{N}_\bfb$ when $\bfb$ is cycle-atomic. 
Given the above characterization of circuits in corank two matroids, we note that $\bfb$ is cycle-atomic for a corank two matroid precisely when $\bfb\subseteq P_i$ for some parallel class $P_i$ of $\M^\ast$. \\

\begin{proposition}
  \label{prop:Corank2IsUnobstructed}
  If $\M$ is a matroid of corank at most two, then $T^2(\M)=0$.
\end{proposition}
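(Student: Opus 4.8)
The plan is to reduce to purely non-positive multidegrees and then to split the argument according to whether $\bfb$ is a circuit or a face, exploiting the long exact sequence \eqref{eq:les} together with the explicit description of the circuits of a corank two matroid in \eqref{eq:circuitsCorank2}.

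First I would set up the reductions. By \eqref{eq:TiOfLink} it suffices to prove $T^2_{-\bfb}(\M)=0$ for every $\bfb\subseteq[n]$, since each link $\lk_\M A$ is again a matroid of the \emph{same} corank (the difference between the number of elements and the rank is preserved under contraction, exactly as in the proof of \Cref{cor:unobstructedUnr}). If $v$ is a coloop, then $\M=\{v\}\ast(\M\sdif v)$ with $\M\sdif v$ of the same corank, so by the behaviour of $T^2$ under joins \cite{CI14} I may assume $\M$ is coloop-free; coranks $0$ and $1$ are then immediate ($\M$ is a simplex, respectively a join of a boundary of a simplex with a simplex), so the real content is corank exactly two. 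In that case $\M^\ast$ is a loop-free rank two matroid determined by its parallel classes $P_1,\dots,P_m$, which partition $[n]$, and by \eqref{eq:circuitsCorank2} a subset $F$ is a face of $\M$ if and only if $[n]\sdif F$ is \emph{not} contained in a single $P_k$. By \Cref{rem:nonMinimalFace} I then only need $\bfb\in\CC_\M$ and $\bfb\in\M$.

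For a circuit $\bfb=C_k=[n]\sdif P_k$ I would invoke \Cref{rem:T2forCircuits}, reducing the claim to the vanishing of the reduced $H^0$ of $\braket{\wt N_\bfb}$. A direct computation from \Cref{def:NandNtilde} shows that, when $m\geqslant 3$, the only face of $N_{C_k}$ admitting a witness $\bfb'\subsetneq C_k$ is $P_k$ itself, so $\braket{\wt N_{C_k}}$ is a single simplex, while for $m=2$ one gets $\wt N_{C_k}=\emptyset$; in both cases $\wt H^0\braket{\wt N_{C_k}}=0$, hence $T^2_{-C_k}(\M)=0$. The main work is the case of a face $\bfb\in\M$, which I would organize by comparing $\bfb$ with the parallel classes. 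Writing $G=[n]\sdif\bfb$ and $K_j:=G\sdif P_j$, one checks that $F\in N_\bfb$ iff $G\sdif F$ is a nonempty subset of a single class, and that the extra requirement $F\in\M$ is automatic \emph{exactly} when $\bfb$ meets at least two classes. This yields a dichotomy governed by \bfb\ being cycle-atomic. If $\bfb$ meets at least two classes, then $N_\bfb=\bigcup_j[K_j,G]$ and $\wt N_\bfb=\bigcup_{\,j:\,\bfb\cap P_j\neq\emptyset}[K_j,G]$ are unions of closed intervals all sharing the top face $G$; since each piece realizes to the convex set of points whose support contains $K_j$, and all intersections contain the interior of the simplex on $G$, a nerve argument shows $\braket{N_\bfb}$ and $\braket{\wt N_\bfb}$ are both contractible, and \eqref{eq:les} forces $T^2_{-\bfb}(\M)=0$. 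If instead $\bfb\subseteq P_i$ (the cycle-atomic case), then $G\notin N_\bfb$, I would show $\wt N_\bfb=\emptyset$, so that $T^2_{-\bfb}(\M)\cong H^1\braket{N_\bfb}$ by \eqref{eq:les} (or by \Cref{rem:T2ei} when $\#\bfb=1$); here $\braket{N_\bfb}$ is the topological disjoint union over $l\neq i$ of the sets of points whose vanishing-coordinate set is a nonempty subset of $P_l$, each of which is a finite union of convex faces with all intersections nonempty (using $K_l\neq\emptyset$, valid in the face case), hence contractible, so $H^1\braket{N_\bfb}=0$.

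The hardest part is exactly this face case: pinning down the homotopy types of $\braket{N_\bfb}$ and $\braket{\wt N_\bfb}$, and recognizing that the cycle-atomic faces are precisely those for which the top face $G$ drops out of $N_\bfb$, converting a connected complex into a disjoint union of contractible pieces whose only surviving invariant is $H^0$ --- which $T^2$, via \eqref{eq:les} and \Cref{thm:TiAsRelativeCohomology}, does not detect. I expect to lean on the structural statements about $N_\bfb$ and $\wt N_\bfb$ for cycle-atomic $\bfb$ from \cite{BC2023}, and to spend care on the degenerate subcases: singleton parallel classes (which produce loops of $\M$ and are handled by the second part of \Cref{rem:T2ei}), the possibility $P_i\sdif\bfb=\emptyset$, and $\#\bfb=1$, where \Cref{thm:TiAsRelativeCohomology} requires reduced cohomology.
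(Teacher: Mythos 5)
Your proof is correct and follows essentially the same route as the paper's: reduce to purely negative multidegrees via links (which preserve corank), then split according to whether $\bfb$ lies inside a single parallel class of $\M^\ast$ --- where $\wt{N}_\bfb=\emptyset$ and each connected component of $\braket{N_\bfb}$ is contractible, so $T^2_{-\bfb}(\M)\simeq H^1\braket{N_\bfb}=0$ --- or $\bfb$ meets at least two classes, where $G=[n]\sdif\bfb$ is a common maximal face of $N_\bfb$ and $\wt{N}_\bfb$, making both realizations contractible and killing $T^2_{-\bfb}(\M)$ via \eqref{eq:les}. The only differences are organizational: you peel off circuits as a separate case via \Cref{rem:T2forCircuits} (the paper absorbs them into the two main cases), and you verify the structural facts about $N_\bfb$ and $\wt{N}_\bfb$ directly from the parallel-class description of faces, where the paper cites \cite{BC2023}.
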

\begin{proof}
  Notice first that, as all zero-dimensional complexes are matroids, by Lemma \ref{lem:dim0T20} the statement holds for rank one matroids.
  This will allow us to use induction when $\M$ contains a coloop. \\[1ex]
  If the corank is zero or one, then $\K[\M]$ is either a polynomial ring or the coordinate ring of a monomial hypersurface. In both cases $T^2$ vanishes. \\[1ex]
  If the corank is two, then we first notice that $\link_\M A$  has corank at most two for every $A\subset \M$.
  Thus, by (\ref{eq:TiOfLink}), it is enough to prove that $T^2_{-\bfb}(\M)=0$ for every corank two matroid on $[r+2]$ and every $\bfb\subseteq[r+2]$.
  Denote by $P_1,\dots,P_m$ the parallel classes of $\M^\ast$.
  This means, by the above observations, that the circuits of $\M$ are precisely the sets from (\ref{eq:circuitsCorank2}).
  We distinguish two cases.\\[1ex]
  \textbf{Case 1:} $\bfb$ contains no edge of $\M^\ast$.
  This means that there exists an $i\in\{1,\dots,m\}$ such that $\bfb\subseteq P_i$.
  In other words, $\bfb$ is  cycle-atomic.
  So by \cite[Proposition~3.2]{BC2023}, we have that $\wt{N}_\bfb=\emptyset$.
  This implies by (\ref{eq:les}) that $T^2_{-\bfb}(\M)\simeq H^1\braket{N_\bfb}$.
  By \cite[Lemma~4.3]{BC2023} every connected component of $N_\bfb$ contains a unique inclusion-minimal element.
  Thus  each connected component of $\braket{N_\bfb}$ can be contracted to a point, and thus the first cohomology is trivial. \\[1ex]
  \textbf{Case 2:} $\bfb$ contains an edge $e$ of $\M^\ast$.
  The key observation in this case is that the complement of \bfb\ is contained in both $N_\bfb$ and $\wt{N}_\bfb$:
  \[
    [r+2]\sdif\bfb\in N_\bfb \cap \wt{N}_\bfb.
  \]
  On the one hand, $[r+2]\sdif\bfb \subseteq [r+2]\sdif e \in\CB_\M$, thus  it is a face of $\M$ that is disjoint from $\bfb$.
  On the other hand, $([r+2]\sdif\bfb)\cup \bfb = [r+2]\notin\M$, and for every subset $\bfb'\subsetneq\bfb$ of cardinality one less, we  have
  \[
    \#\left(([r+2]\sdif \bfb)\cup \bfb'\right) = r+1.
  \]
  Because $\rank\M =r$, this set  cannot be a face of \M.
  Thus $N_\bfb$ and $\wt{N}_\bfb$ both have the same unique maximal element: $[r+2]\sdif\bfb$.
  This implies that the associated topological spaces are connected and contractible to a point, and we conclude  by (\ref{eq:les}) that $T^2_{-\bfb}(\M)=0$  also in this case.    
\end{proof}
We conjecture that matroids of corank at most two are the building blocks for all  matroids with vanishing $T^2$.
\begin{conjecture}
  \label{conj:allUnobstructedMatroids}
  A matroid $\M$ has $T^2(\M)=0$ if and only if \M\ is the join of matroids of corank at most two.
\end{conjecture}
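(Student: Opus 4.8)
The plan is to prove the two implications of the biconditional separately, leaning on Proposition~\ref{prop:Corank2IsUnobstructed} for one direction and reducing the other to a statement about \emph{connected} matroids.

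\emph{The easy direction.} Suppose $\M=\M_1\ast\dots\ast\M_k$ is a join of matroids each of corank at most two. By Proposition~\ref{prop:Corank2IsUnobstructed} we have $T^2(\M_i)=0$ for every $i$. The join formula (see (\ref{eq:TofJoins}) and \cite[Proposition~2.3]{CI14}) states that $T^2(\Delta\ast\Delta')=0$ if and only if both $T^2(\Delta)=0$ and $T^2(\Delta')=0$; iterating over the $k$ factors yields $T^2(\M)=0$, so this direction is immediate.

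\emph{The hard direction.} For the converse I would first write $\M$ as the join of its connected components, $\M=\M_1\ast\dots\ast\M_k$ with each $\M_i$ connected, and invoke the join formula again: $T^2(\M)=0$ forces $T^2(\M_i)=0$ for every $i$. The conjecture then reduces to the single assertion
\[
  \M \text{ connected and } T^2(\M)=0 \ \Longrightarrow\ \textup{corank}(\M)\leqslant 2,
\]
or, in contrapositive form, that \emph{every connected matroid of corank at least three has $T^2(\M)\neq 0$}. Here one may assume $\M$ is loopless and coloop-free, since a loop or a coloop would already exhibit $\M$ as a nontrivial join. To produce nonvanishing I would, guided by Proposition~\ref{prop:uniformT2}, try to exhibit a single multidegree $-\bfb$ with $T^2_{-\bfb}(\M)\neq 0$: in the uniform case a two-element face $\bfb$ already works for every corank $\geqslant 3$, the obstruction appearing in $H^1\braket{N_\bfb}$ through the long exact sequence (\ref{eq:les}). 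The hope is that a suitable $\bfb$ (a two-element face, or a circuit, chosen from the structure of $\M$) likewise forces a nonzero relative cohomology group in general. A complementary tool is induction via links: by (\ref{eq:TiOfLink}) the vanishing of $T^2$ descends to every link, and, as in the reasoning of Corollary~\ref{cor:unobstructedUnr}, links of a matroid are matroids of the \emph{same} corank, so one could attempt to contract down to a minimal connected corank-three configuration and analyze $T^2$ there directly.

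\emph{The main obstacle.} The hard direction is exactly the open content of the conjecture, and the difficulty is that the clean combinatorics of the uniform case does not survive in general. For a uniform matroid the spaces $\braket{N_\bfb}$ and $\braket{\wt{N}_\bfb}$ are order complexes of rank-selected subposets of a Boolean lattice, hence shellable with explicitly computable homology; for an arbitrary connected matroid these spaces carry no such structure, and no single multidegree $-\bfb$ is visibly guaranteed to produce nonvanishing cohomology. Controlling the topology of $\braket{N_\bfb}$ and $\braket{\wt{N}_\bfb}$ uniformly across all connected matroids of corank at least three---whether by extracting a uniform-matroid-like substructure or by a rank induction through corank-preserving contractions that stays within the connected regime---is where I expect the real work to lie.
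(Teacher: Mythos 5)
Your proposal is correct and takes exactly the paper's route: the statement is a conjecture, and the paper likewise proves only the backwards direction by combining Proposition~\ref{prop:Corank2IsUnobstructed} with the join formula (\ref{eq:TofJoins}), while the forward direction (equivalently, your reduction: $\M$ connected with $T^2(\M)=0$ implies $\textup{corank}(\M)\leqslant 2$) is precisely what the paper leaves open. One small caution in your sketch of the hard direction: links of a matroid do not in general have the \emph{same} corank --- contraction can create loops, which are absent from the link, so the corank can only drop (accordingly, the proof of Proposition~\ref{prop:Corank2IsUnobstructed} asserts only that corank \emph{at most} two is preserved).
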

We provide here an easy argument for the backwards direction, which is a combination of Proposition~\ref{prop:Corank2IsUnobstructed} and the following general statement.\\[1ex]
For any two simplicial complexes $\D$ and $\Gamma$ we have
\[
  \K[\D\ast\Gamma] = \K[\D]\otimes_\K \K[\Gamma].
\]
By a general property of the cotangent modules \cite[Proposition\,2.3]{CI14} we have for any two \K-algebras $A$ and $B$ that
\begin{equation}
  \label{eq:TofJoins}
  T^2(A\otimes_\K B) \simeq \left(T^2(A)\otimes_\K B\right) \oplus \left(A\otimes_\K T^2(B)\right).
\end{equation}
This implies in particular that $T^2(\D\ast\Gamma)=0$, precisely when  $T^2(\D) = T^2(\Gamma) =0$.\\[1ex]
For matroids, the standard terminology is slightly different.
The join operation is often called the direct sum.
In this context, a joint-irreducible matroid is called \emph{connected}  \cite[Section\,4.2]{Oxl11}.\\[1ex]
Besides Proposition~\ref{prop:Corank2IsUnobstructed} and Corollary~\ref{cor:unobstructedUnr}, we have some computational evidence in favor of Conjecture~\ref{conj:allUnobstructedMatroids}. This was obtained using the Macaulay 2 package \verb|versalDeformations| \cite{M2, Ilt12} and the matroid database of simple matroids on at most 9 elements \cite{MR08}.
\bibliographystyle{alpha}

\begin{thebibliography}{ABHL16}

\bibitem[ABHL16]{altmann2016rigidity}
Klaus Altmann, Mina Bigdeli, J{\"u}rgen Herzog, and Dancheng Lu.
\newblock Algebraically rigid simplicial complexes and graphs.
\newblock {\em J. Pure Appl. Algebra}, 220(8):2914--2935, 2016.

\bibitem[AC04]{altmann2000stanleyreisner}
Klaus Altmann and Jan~Arthur Christophersen.
\newblock Cotangent cohomology of {S}tanley-{R}eisner rings.
\newblock {\em Manuscripta Math.}, 115(3):361--378, 2004.

\bibitem[And74]{And74}
Michel Andr{\'e}.
\newblock {\em Homologie des algebres commutatives}, volume 206.
\newblock Springer, 1974.

\bibitem[BC23]{BC2023}
William Brehm and Alexandru Constantinescu.
\newblock The first cotangent cohomology module for matroids.
\newblock {\em Journal of Combinatorial Algebra}, 7(3):327--345, 2023.

\bibitem[Bj{\"{o}}80]{Bjo}
Anders Bj{\"{o}}rner.
\newblock Shellable and {C}ohen-{M}acaulay partially ordered sets.
\newblock {\em Transactions of the American Mathematical Society},
  260(1):159--183, 1980.

\bibitem[CI14]{CI14}
Jan~Arthur Christophersen and Nathan~Owen Ilten.
\newblock Degenerations to unobstructed {F}ano {S}tanley--{R}eisner schemes.
\newblock {\em Mathematische Zeitschrift}, 278:131--148, 2014.

\bibitem[CV15]{CV15}
Alexandru Constantinescu and Matteo Varbaro.
\newblock h-vectors of matroid complexes.
\newblock {\em Combinatorial Methods in Topology and Algebra}, pages 203--227,
  2015.

\bibitem[FN18]{FN18}
Gunnar Fl{\o}ystad and Amin Nematbakhsh.
\newblock Rigid ideals by deforming quadratic letterplace ideals.
\newblock {\em Journal of Algebra}, 510:413--457, 2018.

\bibitem[GS]{M2}
Daniel~R. Grayson and Michael~E. Stillman.
\newblock Macaulay2, a software system for research in algebraic geometry.
\newblock Available at \url{http://www2.macaulay2.com}.

\bibitem[Har09]{HAr09}
Robin Hartshorne.
\newblock {\em Deformation theory}, volume 257.
\newblock Springer Science \& Business Media, 2009.

\bibitem[Hos07]{Hos07}
Richard Hoshino.
\newblock {\em Independence polynomials of circulant graphs}.
\newblock Dalhousie University, 2007.

\bibitem[ICT21]{INT21}
Nathan Ilten, Alfredo~N{\'a}jera Ch{\'a}vez, and Hipolito Treffinger.
\newblock Deformation theory for finite cluster complexes.
\newblock {\em arXiv preprint arXiv:2111.02566}, 2021.

\bibitem[Ilt12]{Ilt12}
Nathan~Owen Ilten.
\newblock Versal deformations and local {H}ilbert schemes.
\newblock {\em Journal of Software for Algebra and Geometry}, 4(1):12--–16,
  2012.

\bibitem[Lod13]{Lod13}
Jean-Louis Loday.
\newblock {\em Cyclic homology}, volume 301.
\newblock Springer Science \& Business Media, 2013.

\bibitem[MR08]{MR08}
Dillon Mayhew and Gordon~F Royle.
\newblock Matroids with nine elements.
\newblock {\em Journal of Combinatorial Theory, Series B}, 98(2):415--431,
  2008.

\bibitem[Nem16]{Nem16}
Amin Nematbakhsh.
\newblock Cotangent cohomology of quadratic monomial ideals.
\newblock {\em arXiv eprint arXiv:1609.06274}, 2016.

\bibitem[Oxl11]{Oxl11}
James Oxley.
\newblock {\em Matroid Theory, Second Edition}.
\newblock Oxford University Press, 2011.

\bibitem[Ser07]{Ser07}
Edoardo Sernesi.
\newblock {\em Deformations of algebraic schemes}, volume 334.
\newblock Springer Science \& Business Media, 2007.

\end{thebibliography}

\end{document}